\newcommand{\R}{\mathbb{R}}
\newcommand{\N}{\mathbb{N}}
\newcommand{\Z}{\mathbb{Z}}
\def\diam{\qopname\relax o{diam}}
\def\dist{\qopname\relax o{dist}}
\def\supp{\qopname\relax o{supp}}
\def\min{\qopname\relax o{min}}
\numberwithin{equation}{section}
\def\kint_#1{\mathchoice%
          {\mathop{\kern 0.2em\vrule width 0.6em height 0.69678ex depth -0.58065ex
                  \kern -0.8em \intop}\nolimits_{\kern -0.4em#1}}%
          {\mathop{\kern 0.1em\vrule width 0.5em height 0.69678ex depth -0.60387ex
                  \kern -0.6em \intop}\nolimits_{#1}}%
          {\mathop{\kern 0.1em\vrule width 0.5em height 0.69678ex depth -0.60387ex
                  \kern -0.6em \intop}\nolimits_{#1}}%
          {\mathop{\kern 0.1em\vrule width 0.5em height 0.69678ex depth -0.60387ex
                  \kern -0.6em \intop}\nolimits_{#1}}}
\theoremstyle{plain} 
\newtheorem{lemma}[equation]{Lemma} 
\newtheorem{proposition}[equation]{Proposition} 
\newtheorem{theorem}[equation]{Theorem} 
\newtheorem{corollary}[equation]{Corollary}
\theoremstyle{definition}
\newtheorem{definition}[equation]{Definition} 
\theoremstyle{remark}
\newtheorem{remark}[equation]{Remark}
\newtheorem*{ack}{Acknowledgment}
\numberwithin{equation}{subsection}
\title[Hardy inequalities II]{Hardy inequalities in Triebel--Lizorkin spaces II. \\Aikawa dimension} 
\keywords{Hardy inequality, Triebel--Lizorkin space, Aikawa dimension, John domain}
\subjclass[2000]{46E35, 26D15}
\author[L Ihnatsyeva]{Lizaveta Ihnatsyeva}   
\address[L.I.]{Department of Mathematics and Statistics, P.O. Box 35 (MaD), FIN-40014 University of Jyv\"askyl\"a, Finland}
\email{lizaveta.ihnatsyeva@aalto.fi}
\author[AV V\"ah\"akangas]{Antti V. V\"ah\"akangas}
\address[A.V.V.]{Department of Mathematics and Statistics,
P.O. Box 68, FI-00014 University of Helsinki, Finland} \email{antti.vahakangas@helsinki.fi}
\begin{document}

	\maketitle

\begin{abstract} 
We prove inequalities of Hardy type for functions in Triebel--Lizorkin spaces $F^s_{pq}(G)$ on a domain $G\subset \R^n$, whose boundary has the Aikawa dimension 
strictly less than
 $n-sp$. 
\end{abstract}

\section{Introduction}

In this paper, we study Hardy-type inequalities for functions
in Triebel--Lizorkin spaces $F^s_{pq}(G)$; see \cite{T99}
for the case of bounded smooth domains $G$.
We assume that the boundary $\partial G$ of a domain $G$ is
`thin', in the sense that
its Aikawa
dimension is strictly smaller than 
$n-sp$. The notion of the Aikawa dimension appears  
in connection with the quasiadditivity of Riesz capacity, \cite{aikawa};
subsequently, it has turned out to be useful in other questions in the theory of function spaces, 
see e.g.  \cite{FJ,sickel}.
In particular, it is known that, for every $f\in C^\infty_0(G)$, a  `classical' Hardy inequality
\begin{equation}\label{e.classical}
\int_G \frac{\lvert f(x)\rvert^p}{\dist(x,\partial G)^p}\,dx \le C\int_G \lvert \nabla f(x)\rvert^p\,dx\,,
\end{equation}
holds
if $1<p<n$ and
$\partial G$ is `thin', i.e., if
 $\mathrm{dim}_{\mathcal{A}}(\partial G)<n-p$.
Indeed, as it is observed in \cite{lehrback}, this result is implicitly contained in
\cite{KoskelaZhong}. 
On the other hand, it is well known that
inequality \eqref{e.classical} holds if $\R^n\setminus G$ is $(1,p)$ uniformly fat with $1<p\le n$, we refer to \cite{Lewis1988}. These two last results exhibit a dichotomy between `thin' and `fat' sets 
which manifests in Hardy-type inequalities. 

Though our main result is Theorem \ref{t.john_hardy_TL},
we also formulate and prove the following illustrative theorem under an additional assumption that $G$ is a John domain.

\begin{theorem}\label{t.john_hardy}
Let $n\ge 2$, $1<p<\infty$, and $0<s<\min\{1,n/p\}$.
Suppose that  $G$ is a John domain in $\R^n$ such that
$\mathrm{dim}_{\mathcal{A}}(\partial G)< n-sp$. Then, for every $f\in L^p(G)$,
\begin{equation}\label{e.Hardy_basic}
\begin{split}
\bigg(\int_{G} \frac{\lvert f(x)\rvert^p}{\mathrm{dist}(x,\partial{G})^{sp}}\,dx \bigg)^{1/p}
\le  C\bigg\{\lVert f\rVert_{L^p(G)} + \bigg(\int_{G} \int_{G} \frac{\lvert f(x)-f(y)\rvert^p}{\lvert x-y\rvert^{n+sp}}\,dy\,dx\bigg)^{1/p}\bigg\}\,,
\end{split}
\end{equation}
where a constant $C$  depends on parameters $n$, $s$, $p$, and $G$.
\end{theorem}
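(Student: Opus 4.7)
The plan is to combine a Whitney decomposition of $G$ with a fractional Poincar\'e inequality on cubes, a chain argument based on the John property, and a summation step that exploits the Aikawa dimension assumption. I would fix a Whitney decomposition $\W$ of $G$ and a John center $x_0\in G$ lying in a central cube $Q_*\in\W$; for each $Q\in\W$, the John condition then yields a chain $Q=Q_0,Q_1,\ldots,Q_{N(Q)}=Q_*$ of Whitney cubes whose neighbours overlap and whose side lengths grow geometrically as one moves from $Q$ to $Q_*$. The starting point is the decomposition
\[
\int_G \frac{|f(x)|^p}{\dist(x,\partial G)^{sp}}\,dx \le C \sum_{Q\in\W} \ell(Q)^{-sp} \int_Q |f(x)|^p\,dx,
\]
after which I split $f=(f-f_Q)+f_Q$ on each $Q$ and treat the two resulting sums separately.

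The oscillation piece $\sum_Q \ell(Q)^{-sp} \int_Q |f-f_Q|^p\,dx$ is controlled directly by the fractional Poincar\'e inequality on cubes,
\[
\int_Q |f-f_Q|^p\,dx \le C\,\ell(Q)^{sp} \int_Q\!\!\int_Q \frac{|f(x)-f(y)|^p}{|x-y|^{n+sp}}\,dy\,dx,
\]
whose contributions sum, using the bounded overlap of Whitney cubes, to a constant multiple of the Gagliardo seminorm on the right side of \eqref{e.Hardy_basic}. For the mean-value piece, I telescope along the John chain,
\[
|f_Q|^p \le C|f_{Q_*}|^p + C\Big(\sum_{k=0}^{N(Q)-1} |f_{Q_k}-f_{Q_{k+1}}|\Big)^p,
\]
and bound each difference $|f_{Q_k}-f_{Q_{k+1}}|$ by the fractional Poincar\'e inequality on a cube containing $Q_k\cup Q_{k+1}$ of side length comparable to $\ell(Q_k)$.

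The crucial step is the resulting summation over $Q\in\W$ weighted by $|Q|\,\ell(Q)^{-sp}=\ell(Q)^{n-sp}$. An application of H\"older's inequality with an exponent slightly larger than $sp$ turns the chain sum into a weighted sum of local Gagliardo integrals indexed by chain cubes $Q'$, and swapping the order of summation produces, for each such $Q'$, an inner factor of the form $\sum_{\{Q\in\W:\,Q'\in\mathrm{chain}(Q)\}}\ell(Q)^{n-sp}$. Here I invoke the Aikawa hypothesis: choosing $\lambda$ with $\mathrm{dim}_{\mathcal{A}}(\partial G)<\lambda<n-sp$, the Aikawa condition is equivalent to the Whitney packing bound $\sum_{Q\subset B(w,r)}\ell(Q)^\lambda\le C r^\lambda$ for $w\in\partial G$ and $r>0$. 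Since every cube whose chain passes through $Q'$ lies in a ball near $\partial G$ of radius comparable to $\ell(Q')$, this packing bound collapses the inner weight to $C\ell(Q')^{n-sp}$, precisely the factor needed so that, once combined with the chain Gagliardo integrals, the total is dominated by a constant multiple of the Gagliardo seminorm of $f$ on $G\times G$. The residual term $|f_{Q_*}|^p\sum_{Q\in\W}\ell(Q)^{n-sp}$ is bounded, again via Aikawa packing applied to a ball containing $G$, by a constant multiple of $\|f\|_{L^p(G)}^p$, using $|Q_*|\,|f_{Q_*}|^p\le \|f\|_{L^p(Q_*)}^p$.

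The main obstacle is the bookkeeping in this last Fubini step: one must calibrate the H\"older exponent and the slack $\lambda<n-sp$ so that the $\ell(Q)^{n-sp}$ weight collapses cleanly against the Aikawa packing estimate while leaving room to absorb geometric-series factors coming from the John chain. This is the only point at which the strict inequality $\mathrm{dim}_{\mathcal{A}}(\partial G)<n-sp$ is used in an essential way; once the packing argument closes, the combined estimate yields \eqref{e.Hardy_basic}.
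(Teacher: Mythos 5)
Your proposal follows essentially the same route as the paper's proof: Whitney decomposition, oscillation/mean split with a fractional Poincar\'e inequality on cubes, telescoping along John chains, interchange of summation into a shadow sum, and the Aikawa packing bound (with a slack exponent $\lambda<n-sp$) to collapse the inner weight and absorb the polynomial chain-length factors introduced by H\"older. The paper packages the last step as condition (3) of Proposition \ref{p.admissible} and works with dilated Whitney cubes $Q^*$ so consecutive chain cubes overlap substantially, while you instead take means over $Q$ and bridge consecutive cubes through a common containing cube, but these are minor presentational differences and the argument is the same.
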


Recall that
bounded Lipschitz domains, and bounded domains with the interior cone condition,
are John domains. Also, the Koch snowflake $G$ is a John domain with
$\mathrm{dim}_{\mathcal{A}}(\partial G)=\log 4/ \log 3$.
For bounded Lipschitz domains $G$
in the `fat' case of $sp>1$,
inequality \eqref{e.Hardy_basic} holds for every $f\in C^\infty_0(G)$ without 
the $L^p$-term $\lVert f\rVert_{L^p(G)}$
 on the right hand side;
furthermore, the  $L^p$-term cannot
be omitted if $sp\le 1$, \cite{Dyda2004}. 
But in contrast with the `fat' case, if the boundary is `thin', the
vanishing boundary values play no role.

In \S \ref{s.application}, we apply Theorem \ref{t.john_hardy} to prove the
boundedness of the `zero extension operator' on fractional Sobolev spaces,
\begin{equation}\label{zero_ext_def}
E_0:W^{s,p}(G)\to W^{s,p}(\R^n)\,,\quad E_0 f(x)=\begin{cases} f(x)\,,\qquad &x\in G\,;\\ 0\,,\qquad &x\in \R^n\setminus G\,.\end{cases}
\end{equation}
Further extension results for fractional Sobolev spaces 
can be found in \cite{Z}.



\medskip

The following theorem is our main result; 
it is a generalisation of Theorem \ref{t.john_hardy}.
For the definition of Triebel--Lizorkin spaces on domains, we refer
to \S \ref{s.notation}.

\begin{theorem}\label{t.john_hardy_TL}
Suppose $G$ is a bounded or unbounded domain in $\R^n$, $n\ge 2$,
with a compact boundary.
Let  $1<p<\infty$ and $0<s<n/p$.
Then the following statements hold:
\begin{itemize}
\item[(A)]
Assume that $\mathrm{dim}_{\mathcal{A}}(\partial G)< n-sp$. Then,
for every $1\le q< \infty$ and $f\in F^{s}_{pq}(G)$,
\begin{equation}\label{hardy}
\bigg(\int_{G} \frac{\lvert f(x)\rvert^p}{\mathrm{dist}(x,\partial{G})^{sp}}\,dx\bigg)^{1/p}
\lesssim \lVert f\rVert_{F^s_{pq}(G)}\,.
\end{equation}
\item[(B)] Conversely, if 
$ \R^n\setminus G$ has zero Lebesgue measure and
inequality \eqref{hardy} holds for some
$1\le q\le \infty$ and all $f\in F^s_{pq}(G)$, then $\mathrm{dim}_{\mathcal{A}}(\partial G)< n-sp$.
\end{itemize}
\end{theorem}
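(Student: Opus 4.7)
The theorem splits into two independent claims and I would prove them separately. Part (A) is the substantive direction, while part (B) is a short test-function converse.

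For part (A), the plan is to combine a pointwise representation of $f\in F^s_{pq}(G)$ with an Aikawa-type integral bound on the distance weight. The key preliminary is the standard consequence of $\mathrm{dim}_{\mathcal{A}}(\partial G)<n-sp$: there is $\lambda$ with $\mathrm{dim}_{\mathcal{A}}(\partial G)<\lambda<n-sp$ such that
\[
  \int_{B(x_0,r)} \dist(y,\partial G)^{\lambda-n}\,dy \leq C r^\lambda
  \qquad\text{for all } x_0\in\partial G,\ r>0.
\]
Using the restriction definition of $F^s_{pq}(G)$, pick an extension $F\in F^s_{pq}(\R^n)$ of $f$ with $\lVert F\rVert_{F^s_{pq}(\R^n)}\leq 2\lVert f\rVert_{F^s_{pq}(G)}$. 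The plan is then to apply a Hedberg-type pointwise bound
\[
  \lvert F(x)\rvert \lesssim \lvert B(x,\rho)\rvert^{-1}\int_{B(x,\rho)}\lvert F\rvert\,dy + \rho^s g(x),
  \qquad \rho>0,
\]
coming from the local-means (or square-function) characterisation of $F^s_{pq}(\R^n)$, with $g\in L^p(\R^n)$ satisfying $\lVert g\rVert_{L^p}\lesssim \lVert F\rVert_{F^s_{pq}(\R^n)}$. Choosing $\rho = \dist(x,\partial G)/2$, raising to the $p$th power and integrating against $\dist(\cdot,\partial G)^{-sp}\,dx$, the second term is immediately controlled by $\lVert f\rVert_{F^s_{pq}(G)}^p$. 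For the first, averaged term, I would use the Aikawa integral bound above, combined with a Whitney decomposition of $G$ and Fubini, to absorb the distance weight at the level of a fractional maximal operator.

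For part (B), the plan is a standard test-function argument. Since $\R^n\setminus G$ has zero Lebesgue measure, smooth bumps on $\R^n$ descend to elements of $F^s_{pq}(G)$ with comparable norm. Fix $x_0\in\partial G$ and $r>0$, and set $\phi_{x_0,r}(y) = \psi((y-x_0)/r)$ with $\psi\in C_c^\infty(B(0,2))$ equal to $1$ on $B(0,1)$; a scaling argument yields $\lVert \phi_{x_0,r}\rVert_{F^s_{pq}(\R^n)}\lesssim r^{n/p-s}$ uniformly in $q\in[1,\infty]$, while the left-hand side of \eqref{hardy} with $f = \phi_{x_0,r}$ is bounded below by $\int_{B(x_0,r)} \dist(y,\partial G)^{-sp}\,dy$. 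The hypothesised Hardy inequality then delivers
\[
  \int_{B(x_0,r)} \dist(y,\partial G)^{-sp}\,dy \leq C r^{n-sp}
  \qquad\text{for all } x_0\in\partial G,\ r>0.
\]
A priori this yields only $\mathrm{dim}_{\mathcal{A}}(\partial G)\leq n-sp$; to upgrade to strict inequality I would invoke the well-known self-improvement (``open-end'') property of Aikawa's integral condition, which says that if the bound at one exponent holds uniformly, then the same bound already holds at a strictly smaller exponent.

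The main obstacle is the first, averaged contribution in part (A): converting $\lvert F(x)\rvert\lesssim \lvert B(x,\rho(x))\rvert^{-1}\int_{B(x,\rho(x))}\lvert F\rvert\,dy$ into the weighted Hardy estimate requires a maximal inequality in $L^p(G,\dist(\cdot,\partial G)^{-sp}\,dx)$, and the crucial input is that the Aikawa hypothesis forces $\dist(\cdot,\partial G)^{-sp}$ to lie in an appropriate Muckenhoupt-type class. Making this work uniformly in $q\in[1,\infty)$ without a structural hypothesis on $G$ such as the John property used in Theorem \ref{t.john_hardy} is where the argument must be most delicate.
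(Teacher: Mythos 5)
Your part (B) is essentially the paper's argument: test functions $\phi_{x_0,r}$ plus the homogeneity estimate $\lVert f(r\cdot)\rVert_{F^s_{pq}} \simeq r^{s-n/p}\lVert f\rVert_{F^s_{pq}}$ give $n-sp\in\mathcal{A}(\partial G)$, and the upgrade to strict inequality is exactly the Gehring self-improvement the paper invokes (via the reverse H\"older inequality \eqref{ReverseHolder}); no issues there, beyond noting that the homogeneity \eqref{hom} is only stated for $0<r\le 1$, which is however all one needs by Remark~\ref{r.compact}.

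Part (A) has a genuine gap, and it is precisely the one you flag at the end. The Hedberg-type decomposition isolates the oscillation, which you control by $\rho^s g(x)$, but it leaves the averaged baseline $\fint_{B(x,\rho)}\lvert F\rvert$ uncontrolled by anything better than the weight itself. Concretely, for $y\in B(x,\rho)$ with $\rho=\dist(x,\partial G)/2$ one has $\dist(y,\partial G)\simeq\dist(x,\partial G)$, so
\[
\dist(x,\partial G)^{-s}\fint_{B(x,\rho)}\lvert F\rvert\,dy \simeq \fint_{B(x,\rho)}\lvert F(y)\rvert\dist(y,\partial G)^{-s}\,dy \le M\bigl(\lvert F\rvert\dist(\cdot,\partial G)^{-s}\bigr)(x)\,,
\]
and taking $L^p$ of both sides reproduces $\int\lvert F\rvert^p\dist(\cdot,\partial G)^{-sp}$ on the right: the argument is circular. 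Observing that $\dist(\cdot,\partial G)^{-sp}$ is $A_1$ (which is indeed a consequence of the Aikawa hypothesis) does not break this circularity, because the $A_p$ theory gives $\lVert Mh\rVert_{L^p(w)}\lesssim \lVert h\rVert_{L^p(w)}$, i.e., weighted-to-weighted, whereas one needs the weighted average on the left to be dominated by the \emph{unweighted} $\lVert F\rVert_{L^p}$. A two-weight inequality of this strength is false for $w\ge 1$. In addition, the pointwise Haj\l asz--Hedberg inequality $\lvert F(x)-F(y)\rvert\lesssim\lvert x-y\rvert^s(g(x)+g(y))$ with $g\in L^p$ characterises $F^s_{p\infty}$ only for $0<s<1$; the statement allows $s\ge 1$ (small $p$), which would require higher-order polynomial corrections and brings the uncontrolled baseline back in a worse form.

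What the paper does instead is essentially designed to kill this baseline. It works with the local polynomial approximation norm of $F^s_{pq}$, splits $P_{k,Q}g = P_{k,Q_0}g + (P_{k,Q}g - P_{k,Q_0}g)$ against a fixed anchor cube $Q_0$ of side length $\simeq\diam(\partial G)$, and telescopes $P_{k,Q}g - P_{k,Q_0}g$ along the unique dyadic ancestor chain $\mathcal{C}(Q)$ (Proposition~\ref{chain0}). The anchor term is controlled directly by $\lVert g\rVert_{L^p}$ together with the Aikawa bound, and the telescoping sum is handled by the combinatorial estimate in Proposition~\ref{p.admissible} together with the reverse H\"older inequality for porous sets (Theorem~\ref{reverse}), which is also the mechanism that makes the result hold for all $1\le q<\infty$ rather than just $q\le p$. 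None of these structural devices appear in your sketch, and without something playing the role of the anchor $Q_0$ plus the chain telescoping, the averaged term cannot be controlled.
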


For the class of domains whose boundary is compact and whose complement has zero Lebesgue measure, Theorem \ref{t.john_hardy_TL} gives further information.
First, in this class we obtain a characterization for the validity of Hardy inequality \eqref{hardy} in terms
of the Aikawa dimension of $\partial G$. In particular, the validity is seen to be independent of the microscopic parameter $q$.
Moreover, we recover a self-improving property 
of Hardy inequality \eqref{hardy} with respect to parameters $s$ and $p$;
this extends the results in \cite{KoskelaZhong} for the
classical Hardy inequality. 
Finally, since the notion of porosity is related
to the Aikawa dimension, see Remark \ref{porous_obs}, 
we immediately obtain a characterization for porosity, Corollary \ref{PorousityCharact}; analogous statements, showing  the sufficiency of porosity 
for the boundedness of pointwise multipliers, can be found in \cite{T03} and \cite[Proposition 3.19]{T4}. 


\begin{corollary}\label{PorousityCharact}
Suppose $G$ is
a  domain in $\R^n$ such that $\partial G$ is compact
and the Lebesgue measure of $\R^n\setminus G$ is zero. Then the following
statements are equivalent:
\begin{itemize}
\item[(1)]
There is $0<\epsilon<n$ such that,
if $1<p<\infty$ and $0<s<\epsilon/p$,  then
inequality \eqref{hardy} holds for every $1\le q<\infty$ and  
every $f\in F^{s}_{pq}(G)$.
\item[(2)] The boundary $\partial G$ is porous in $\R^n$.
\end{itemize}
\end{corollary}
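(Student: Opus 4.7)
The plan is to deduce the corollary directly from the two halves of Theorem \ref{t.john_hardy_TL}, using the characterisation of porosity in terms of the Aikawa dimension that is recorded in Remark \ref{porous_obs}. The key fact from that remark is: a closed set $E\subset\R^n$ is porous if and only if $\mathrm{dim}_{\mathcal{A}}(E)<n$. Once this equivalence is available, each implication of the corollary reduces to an algebraic manipulation of the inequality relating $sp$ to $n$ and $\mathrm{dim}_{\mathcal{A}}(\partial G)$.

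For the implication $(2)\Rightarrow(1)$ I would argue as follows. If $\partial G$ is porous, set $d:=\mathrm{dim}_{\mathcal{A}}(\partial G)$; by Remark \ref{porous_obs} we have $d<n$, so the number $\epsilon:=\min\{n-d,\,n/2\}$ lies in $(0,n)$. Whenever $1<p<\infty$ and $0<s<\epsilon/p$ we then get $sp<\epsilon\le n-d$, i.e.\ $\mathrm{dim}_{\mathcal{A}}(\partial G)<n-sp$, and in particular $s<n/p$. The hypotheses of Theorem \ref{t.john_hardy_TL}(A) are met, so \eqref{hardy} holds for every $1\le q<\infty$ and every $f\in F^s_{pq}(G)$.

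For the reverse implication $(1)\Rightarrow(2)$, let $\epsilon\in(0,n)$ be as provided by (1). Fix a single admissible triple, say $p=2$, a value $s\in(0,\epsilon/p)$, and $q=1$. Condition (1) furnishes \eqref{hardy} for every $f\in F^{s}_{p1}(G)$. Since $\R^n\setminus G$ has zero Lebesgue measure, we may invoke Theorem \ref{t.john_hardy_TL}(B) to conclude that $\mathrm{dim}_{\mathcal{A}}(\partial G)<n-sp<n$. Applying Remark \ref{porous_obs} once more then yields that $\partial G$ is porous in $\R^n$, completing the equivalence.

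There is no real obstacle here; everything hinges on already-established material. The only subtlety I would double-check is that the numerical assumption $0<s<\epsilon/p$ in (1) is compatible with the standing range $0<s<n/p$ of Theorem \ref{t.john_hardy_TL}, which is automatic from $\epsilon<n$. If the porosity/Aikawa equivalence of Remark \ref{porous_obs} were not available, then verifying it directly from the metric definition of porosity would become the main work, but as the remark already supplies it, the corollary is essentially a two-line corollary to the main theorem.
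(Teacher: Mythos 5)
Your proposal is correct and takes essentially the same approach the paper intends: the paper presents Corollary \ref{PorousityCharact} as an immediate consequence of Theorem \ref{t.john_hardy_TL} combined with the porosity/Aikawa-dimension equivalence in Remark \ref{porous_obs}, which is exactly the argument you give, including the careful choice $\epsilon=\min\{n-d,n/2\}$ to guarantee $\epsilon<n$ even when $\mathrm{dim}_{\mathcal{A}}(\partial G)=0$.
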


Explicit boundary conditions for $f$ are not imposed in statement (A) of Theorem  \ref{t.john_hardy_TL}.
This is related to the fact that for a bounded domain $G$ with $\mathrm{dim}_{\mathcal{A}}(\partial G)< n-sp$,
\begin{equation}\label{indent}
 \overset\circ{F^{s}_{pq}}({G})= F^s_{pq}(G) = \widetilde{F}^s_{pq}(G)\,.
\end{equation}
In particular, $C^\infty_0(G)$ is dense in $F^s_{pq}(G)$.
These embeddings are available in the literature, some of them implicitly, \cite{caetano,FJ}.
The validity of Hardy inequality is related to the second identification
in \eqref{indent} or, more precisely,
to a question if
the pointwise multiplication $f\mapsto \chi_G f$ is bounded on $F^{s}_{pq}(\R^n)$.
As an application of our main result, Theorem \ref{t.john_hardy_TL}, we obtain a simple proof for certain known multiplier results in \cite[\S 13]{FJ}
for
 spaces $F^s_{pq}(\R^n)$, see 
also \S \ref{s.application}.


Theorem \ref{t.john_hardy_TL} applies in the case of `thin' sets.
We would also like to mention some of the known results
for  `fat' sets. In this case the
first identification
in \eqref{indent} often fails,
and it seems natural
to impose zero boundary conditions.
For an illustration, let us first
focus on the case of $G=\R^n\setminus S$, where
$S$ is a closed Ahlfors $d$-regular set in $\R^n$ with $n-1<d<n$
and $sp>n-d$. 
The $d$-regularity condition is given in terms of the  Hausdorff measure, that is, 
for every $x\in S$ and $0<r<1$,
\[
\mathcal{H}^d(B(x,r)\cap S)  \simeq r^d\,.\] 
For such sets, we have $\mathrm{dim}_{\mathcal{A}}(S) = d$, \cite[Lemma 2.1]{lehrback}.
In our previous work \cite{ihnatsyeva2} we have shown that
inequality \eqref{hardy} holds for functions $f$ in the subspace
\[
\{f\in F^{s}_{pq}(\R^n)\,:\, \mathrm{Tr}_{\partial G} f= 0 \}\,,
\]
where $\mathrm{Tr}_{\partial G} f$ is a trace of $f$ on $\partial G=S$.

Although the last result applies in a somewhat general setting,
it is probably not yet optimal. Indeed, it seems to be natural to replace the $d$-regularity condition  with 
uniform fatness, which is typically used
in connection with classical Hardy inequalities.
Moreover, it is known that a certain fractional Hardy-type inequality
holds,
if
$G$ is a bounded domain in $\R^n$ for which $\R^n\setminus G$ 
is $(s,p)$ locally uniformly fat, \cite[Theorem 1.3]{E-HSV}. Recall that a set $E\subset\R^n$ is $(s,p)$ locally uniformly fat,
if there are positive constants $r_0$ and $\lambda$ such that, for every $x\in E$ and $0<r<r_0$,
\[
R_{s,p}(B(x,r)\cap E)\ge \lambda r^{n-sp}\,.
\]
For example, $R_{s,p}(B(x,r))\simeq r^{n-sp}$
if $x\in \R^n$ and $r>0$.
 Here $R_{s,p}(\cdot)$ is
the $(s,p)$ outer Riesz capacity of a set in $\R^n$, we refer to \cite{Lewis1988}.


\begin{ack}
  AVV would like to thank Eero Saksman for  valuable
  discussions  during the research program Operator Related Function Theory and Time-Frequency Analysis at the Centre for Advanced Study at 
  the Norwegian Academy of Science and Letters in Oslo during 2012--2013. LI is supported by the Academy of Finland, grant 135561.
\end{ack}

%
%
%
%

\section{Notation and preliminaries}\label{s.notation}

\subsection*{Basic notation}
Throughout the paper, a cube means a closed cube $Q=Q(x,r)$ in $\R^n$ centred
at $x\in \R^n$ with side length $\ell(Q)=2r>0$, and with sides
parallel to the coordinate axes. For a cube $Q$ and for $\rho>0$, we write $
\rho Q$ for the dilated cube with side
length $\rho \ell(Q)$. By $\chi_E$ we denote the
characteristic function of a set $E$, the boundary of $E$ is written
as $\partial E$, and $\lvert E\rvert$ is the Lebesgue $n$-measure of a
measurable set $E$ in $\R^n$.
The integral average of $f\in L_\textup{loc}^1(\R^n)$ over a bounded set $E$ with positive measure is written as
$f_E$, that is,
\[
f_E = \fint_E f\, dx = \frac1{\rvert E \lvert}\int_E f\, dx\,.
\] 
Various constants whose value may change even within
a given line are denoted by $C$. 

The family of closed dyadic cubes is denoted by $\mathcal{D}$. Let also
$\mathcal{D}_j$ be the family of those dyadic cubes whose side length
is $2^{-j}$, $j\in\Z$. For a proper open set $G$ we fix its
Whitney decomposition $\mathcal{W}(G)\subset\mathcal{D}$, and write
$\mathcal{W}_j(G)=\mathcal{D}_j\cap \mathcal{W}(G)$. 
For a Whitney
cube $Q\in\mathcal{W}(G)$ we write $Q^*=\frac{9}{8} Q$. Such dilated cubes 
have a bounded overlap, and
they satisfy
\begin{equation}\label{dist_est}
  \frac{3}{4}\diam(Q)\le \dist(x,\partial G)\le 6\diam(Q),
\end{equation}
whenever $x\in Q^*$.  
For other properties of Whitney cubes we refer to \cite[VI.1]{Stein}.

%
%

\subsection*{Aikawa dimension and porous sets} \label{porous_sect}
We begin with the definition of the Aikawa dimension, \cite{lehrback}.
\begin{definition}\label{d.aikawa}
If $E\subset \R^n$ is a closed set with an empty interior, then
define $\mathcal{A}(E)$ to be the set  of all $0<s\le n$ with the following property. 
There
is a constant $C>0$, such that
\begin{equation}\label{e.assouad}
\int_{B(x,r)} \mathrm{dist}(y,E)^{s-n}\,dy\le C r^s
\end{equation}
for every $x\in E$ and all  $0<r<\infty$. The Aikawa dimension
of $E$ is $\mathrm{dim}_{\mathcal{A}}(E)=\inf \mathcal{A}(E)$.
\end{definition}

\begin{remark}\label{r.compact}
Let $E$ be a compact set in $\R^n$ with an empty interior.
Suppose that for $0<s\le n$
there are constants $\epsilon,C>0$ such that
\eqref{e.assouad} holds for all $x\in E$ and $0<r<\epsilon$.
By using compactness arguments it is straightforward to verify that  $s\in \mathcal{A}(E)$.
\end{remark}

\begin{definition}\label{porous}
A  set $S\subset\R^n$ is 
{\em porous} (or {\em $\kappa$-porous}) if for some $\kappa\ge 1$
the following statement is true:
For every cube $Q(x,r)$ with $x\in\R^n$ and $0<r\le 1$ there is $y\in Q(x,r)$ such that
$Q(y,r/\kappa)\cap S=\emptyset$.
\end{definition}

\begin{remark}\label{porous_obs}
A set $S$ 
is porous in $\R^n$ if, and only if, its so called Assouad dimension is strictly less 
than $n$, \cite{Luukkainen}. 
It has been recently shown in \cite{lehrbackII} that the dimensions of Assouad and Aikawa 
of a given set $S\subset \R^n$ 
coincide.
\end{remark}

Let us then recall a 
reverse H\"older type inequality involving porous sets, Theorem \ref{reverse}.
Similar techniques are applied in \cite[\S 13]{FJ}; we also refer to \cite{Bojarski} and \cite{iwaniec}.
For a set $S$ in $\R^n$ and a positive constant $\gamma>0$ we denote
\begin{equation}\label{c_definition}
\mathcal{C}_{S,\gamma} = \{Q\in\mathcal{D}\,:\,\gamma^{-1}\mathrm{dist}(x_Q,S)\le \ell(Q)\le 1\}\,.
\end{equation}
This is the family of dyadic cubes that are relatively close to the set.

\begin{theorem}\label{reverse}
Suppose that $S\subset\R^n$ is porous.
Let $p,q\in (1,\infty)$ and $\{a_Q\}_{Q\in\mathcal{C}_{S,\gamma}}$ be a sequence of non-negative scalars. 
Then
\begin{equation}\label{rev}
\bigg\|\sum_{Q\in\mathcal{C}_{S,\gamma}} \chi_Q a_Q\bigg\|_p\le C\bigg\|\bigg(\sum_{Q\in\mathcal{C}_{S,\gamma}}
(\chi_{Q} a_Q)^q\bigg)^{1/q}\bigg\|_p.
\end{equation}
Here a positive constant $C$ depends on $n$, $p$, $\gamma$ and the set $S$.
\end{theorem}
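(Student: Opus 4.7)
The plan is to extract from the porosity of $S$ a geometric dyadic sparseness of the family $\mathcal{C}_{S,\gamma}$, and then to convert this sparseness into the reverse H\"older inequality \eqref{rev} by a stopping-time / Kolmogorov type argument.

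First I would establish a dyadic porosity estimate: there exist $N\in\mathbb{N}$ and $\theta\in(0,1)$, depending only on $n$, $\gamma$, and the porosity constant $\kappa$ of $S$, such that for every dyadic cube $Q_0\in\mathcal{D}$ of side length $\ell(Q_0)\le 1$ and scale $j_0$,
\[
\bigl|\{x\in Q_0\,:\,Q_j(x)\in\mathcal{C}_{S,\gamma}\text{ for all }j_0\le j\le j_0+N\}\bigr|\le \theta\,\lvert Q_0\rvert,
\]
where $Q_j(x)$ denotes the cube of $\mathcal{D}_j$ containing $x$. The idea is to apply Definition~\ref{porous} to a slightly shrunken concentric copy of $Q_0$, obtaining a sub-region of $Q_0$ of measure $\simeq\lvert Q_0\rvert$ that is disjoint from $S$, and to fix $N$ so large that $\gamma\cdot 2^{-j_0-N}$ is small compared to the size of this sub-region. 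Any cube in $\mathcal{D}_{j_0+N}$ sitting well inside the sub-region then has its centre further than $\gamma\cdot 2^{-j_0-N}$ from $S$, and hence does not belong to $\mathcal{C}_{S,\gamma}$.

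Iterating this estimate $k$ times down the dyadic tree yields
\[
\bigl|\{x\in Q_0\,:\,Q_j(x)\in\mathcal{C}_{S,\gamma}\text{ for all }j_0\le j\le j_0+kN\}\bigr|\le \theta^k\lvert Q_0\rvert,
\]
so that the counting function $N_{\mathcal{C}}(x):=\#\{Q\in\mathcal{C}_{S,\gamma}\,:\,x\in Q\}$ has exponentially decaying super-level sets on each dyadic cube of side length at most $1$. To obtain \eqref{rev}, I would then group the chain of indices $\{j\ge 0\,:\,Q_j(x)\in\mathcal{C}_{S,\gamma}\}$ into consecutive blocks of length $N$: within a single block, ordinary H\"older's inequality for counting measure bounds that block's contribution to $F(x):=\sum_Q\chi_Q(x)a_Q$ by $N^{1-1/q}$ times the corresponding contribution to $G(x):=(\sum_Q(\chi_Q(x)a_Q)^q)^{1/q}$, while the geometric decay controls how many blocks can be populated simultaneously. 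A stopping-time / good-$\lambda$ decomposition along the subtree defined by $\mathcal{C}_{S,\gamma}$ then upgrades this comparison to $\lVert F\rVert_p\le C\lVert G\rVert_p$, with the final constant expressible as a geometric series in $\theta$.

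The main obstacle is precisely this last step. The naive pointwise bound $F\le N_{\mathcal{C}}^{1-1/q}G$ is too weak by itself, because $N_{\mathcal{C}}$ is unbounded and pulling $N_{\mathcal{C}}^{1-1/q}$ outside the integral would demand $G\in L^{p+\varepsilon}$ rather than $L^p$. The trade-off must instead be executed level-set by level-set, using the exponential distribution of $N_{\mathcal{C}}$, and the stopping-time tree must be organized so that the resulting constants depend only on $n$, $p$, $\gamma$, and $S$, as required by the statement of \eqref{rev}.
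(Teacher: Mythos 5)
Your dyadic porosity estimate is a correct and genuinely relevant geometric consequence of porosity: shrinking $Q_0$, locating a porosity hole of size $\simeq\ell(Q_0)$, and choosing $N$ so that cubes of side $2^{-j_0-N}$ well inside the hole fail the $\gamma$-closeness test in \eqref{c_definition} does give $\lvert\{x\in Q_0:Q_j(x)\in\mathcal{C}_{S,\gamma}\text{ for }j_0\le j\le j_0+N\}\rvert\le\theta\lvert Q_0\rvert$, and iteration gives geometric decay. The difficulty is that the proof stops there. You correctly observe that the pointwise H\"older bound $F\le N_{\mathcal{C}}^{1-1/q}G$ gains nothing (because $N_{\mathcal{C}}$ is unbounded and the exponential distribution of $N_{\mathcal{C}}$ alone does not let you absorb $N_{\mathcal{C}}^{1-1/q}$ into $\lVert G\rVert_p$ without extra integrability of $G$); the blocking gives back the same bound after resumming; and the stopping-time / good-$\lambda$ repair is announced (``must instead be executed\ldots'', ``must be organized so that\ldots'') but never carried out. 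As written, the essential analytic step of the proof is missing.

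The proof the paper relies on (it cites \cite{ihnatsyeva} and, for the technique, \cite{FJ,Bojarski,iwaniec}) takes a shorter route that bypasses the counting function entirely. Porosity is used once, locally, to attach to each $Q\in\mathcal{C}_{S,\gamma}$ a subcube $E_Q\subset Q$ with $\lvert E_Q\rvert\gtrsim\lvert Q\rvert$ and $\dist(E_Q,S)\gtrsim\ell(Q)$. For $x\in E_Q$ one then has $\dist(x,S)\simeq\ell(Q)$, so only boundedly many scales (and hence boundedly many $Q$) can have $x\in E_Q$: the family $\{E_Q\}$ has uniformly bounded overlap. One dualizes against $g\ge0$, $\lVert g\rVert_{p'}\le1$, and writes
\[
\int\Big(\sum_{Q}a_Q\chi_Q\Big)g
=\sum_{Q}a_Q\lvert Q\rvert\,\fint_Q g
\le\sum_{Q}a_Q\lvert Q\rvert\inf_{E_Q}Mg
\lesssim\int\Big(\sum_{Q}a_Q\chi_{E_Q}\Big)Mg\,,
\]
where $M$ is the Hardy--Littlewood maximal operator; the bounded overlap turns the inner sum into $\lesssim(\sum_Q(a_Q\chi_Q)^q)^{1/q}$ by H\"older for counting measure with a finite uniform constant, and H\"older in $x$ together with $\lVert Mg\rVert_{p'}\lesssim\lVert g\rVert_{p'}$ finishes the proof. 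In effect, the sets $E_Q$ replace your unbounded counting function by a bounded one from the start, which is exactly where your argument stalls. You should either complete the stopping-time argument in detail or adopt this maximal-function duality argument.
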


The proof of this theorem
is based on maximal-function techniques, we refer to \cite{ihnatsyeva}.

\subsection*{Function spaces}\label{spaces}

There are several equivalent characterizations for the fractional Sobolev spaces and their natural extensions, Triebel--Lizorkin spaces, see e.g. \cite{AH}, \cite{BesovIlinNikolski}, \cite{T89}, and \cite{T2}.
 In this
paper, we mostly use the definition based on the local polynomial approximation approach.

Let $f\in L^u_{\mathrm{loc}}(\R^n)$, $1\le u \le \infty$, and $k\in\N_0$.
Following \cite{Brudnyi74}, we define the {\em normalized local best approximation} of $f$ on a cube
$Q$ in $\R^n$ by
\begin{equation} 
\mathcal{E}_k(f,Q)_{L^u(\R^n)}:=\inf_{P\in \mathcal{P}_{k-1}}\bigg(\fint_{Q}|f(x)-P(x)|^u\;dx\bigg)^{1/u}.
\end{equation}
Here and below  $\mathcal{P}_{m}$, $m\geq 0$, denotes the space of polynomials in $\mathbb{R}^n$ of degree at most $m$. We also denote 
$\mathcal{P}_{-1}=\{0\}$.
Let $Q_1\subset Q_2$ be two cubes in $\R^n$. Then
\begin{equation}\label{eqMonotonyOfLocalApproxSset}
\mathcal{E}_k(f,Q_1)_{L^u(\R^n)}
\le \bigg(\frac{\ell(Q_2)}{\ell(Q_1)}\bigg)^{n/u}
\mathcal{E}_k(f,Q_2)_{L^u(\R^n)}.
\end{equation}
This property is referred as  {\em the monotonicity
of local approximation}.



The following definition of Triebel--Lizorkin spaces 
with positive smoothness
can be found in \cite{T89}.
Let $s>0$, $1\le p<\infty$, $1\le q\le\infty$, and $k$ be an integer such that $s<k$. 
For $f\in L^u_{\mathrm{loc}}(\R^n)$, $1\le u\le \min\{p,q\}$, set for 
all $x\in\R^n$,
\[
F(x):=\bigg(\int_0^1\bigg(\frac{\mathcal{E}_k(f,Q(x,t))_{L^u(\R^n)}}{t^{s}}\bigg)^{q}\;\frac{dt}{t}\bigg)^{1/q}\,,\qquad \text{if }q<\infty,
\]
and \[F(x):=\sup\{t^{-s}\mathcal{E}_k(f,Q(x,t))_{L^u(\R^n)}:\, 0<t\le 1\}\,,\qquad \text{if }q=\infty.
\] 
A function $f$ belongs to a
Triebel--Lizorkin space $F^s_{pq}(\R^n)$ if $f$ and $F$ are both in $L^p(\R^n)$, and the Triebel--Lizorkin norms
\[
\| f\|_{F^s_{pq}(\R^n)}:=\|f\|_{L^p(\R^n)}+\|F\|_{L^p(\R^n)}
\]
are equivalent if $s<k$ and $1\le u\le \min\{p,q\}$. In particular, if $q\ge p$, then
we can set $u=p$.


\subsection*{Function spaces on domains}
Let us recall the definition of the {\em fractional order Sobolev spaces} on a
domain $G\subset\R^n$.
Let
$W^{s,p}(G)$, for $0<s<1$ and  $1< p<\infty$, be the space of functions $f$ in
$L^p({G})$ with
$\lVert f\rVert_{W^{s,p}({G})}
:=\lVert f\rVert_{L^p({G})}+|f|_{W^{s,p}({G})}<\infty$,
where
\begin{equation}\label{d.frac_def}
|f|_{W^{s,p}({G})} := \bigg(\int_{G} \int_{G}
\frac{|f(x)-f(y)|^p}{|x-y|^{n+sp}}\,dy\,dx\bigg)^{1/p}.
\end{equation}

The Triebel--Lizorkin space $F^{s}_{pp}(\R^n)$ 
coincides with the Sobolev space $W^{s,p}(\R^n)$, \cite[pp. 6--7]{T2}.

Let us also recall some notation which is common in the literature on 
function spaces on domains, \cite{T2,T3}. 
Let $G$ be an open set in $\R^n$, $1\le p<\infty$,  $1\le q\leq\infty$,
and $s>0$. Then
\[
F^s_{pq}(G)=\{f\in L^p(G)\,:\, \text{there is a}\, g\in F^s_{pq}(\R^n)\,\,\text{with}\,g|_G=f\}
\]
\[
\Vert f\Vert_{F^s_{pq}(G)}=\inf\Vert g\Vert_{F^s_{pq}(\R^n)},
\]
where the infimum is taken over all $g\in F^s_{pq}(\R^n)$ such that $g|_G=f$ pointwise a.e. 
As usually, we also denote
\begin{equation}\label{definitionFtilde}
\widetilde{F}^s_{pq}(G)=\{f\in L^p(G):\, \text{there is a}\, g\in F^s_{pq}(\R^n)\,\,\text{with}\,g|_G=f \, \text{and} \, \supp g\subset\overline{G}\}
\end{equation}
\[
\Vert f\Vert_{\widetilde{F}^s_{pq}(G)}=\inf\Vert g\Vert_{F^s_{pq}(\R^n)},
\]
where the infimum is taken over all $g$ admitted in \eqref{definitionFtilde},

Finally, 
$\overset\circ{ F^{s}_{pq}}(G)$ is a completion of $C^\infty_0(G)$ in $F^s_{pq}(G)$.

\section{Proof of Theorem \ref{t.john_hardy}}\label{s.hardy_admissible}

We apply the ideas in \cite{h-smv} to prove a Hardy inequality for John domains.

\begin{definition}\label{sjohn}
  A bounded domain $G$ in $\R^n$, $n\ge 2$, is a {\em John domain}, if
  there exist a point $x_0\in G$ and a constant $\beta_G\ge 1$ such that every
  point $x$ in $G$ can be joined to $x_0$ by a rectifiable curve
  $\gamma:[0,\ell]\to G$ parametrized by its arc length for which
  $\gamma(0)=x$, $\gamma(\ell)=x_0$, $\ell \le \beta_G \diam(G)$, and for all $t\in [0,\ell]$,
\[
\dist(\gamma(t),\partial G)\ge t/\beta_G\,.
\]
The point $x_0$ a {\em John center}
of $G$, and the smallest constant $\beta_G\ge 1$ is the {\em
  John constant} of $G$.
\end{definition}

\subsection*{Chain decomposition}
Suppose that $G$ is a John domain and 
 $Q\in\mathcal{W}(G)$. Below,  we obtain a {\em chain} of cubes
\[
\mathcal{C}(Q)= (Q_0,\ldots, Q_m)\subset \mathcal{W}(G)\,,
\]
joining a fixed cube $Q_0$ to the given cube $Q=Q_m$, such that 
$Q_i\not=Q_j$ whenever $i\not=j$,
and there exists a positive constant
$C=C(n)$ for which
\begin{equation}\label{p.2}
\lvert Q_j^* \cap Q_{j-1}^*\rvert \ge C\max\{\lvert Q_j^*\rvert, \lvert Q_{j-1}^*\rvert\}\,,\qquad j\in \{1,\ldots,m\}\,.
\end{equation}
A family
$\{\mathcal{C}(Q):\ Q\in\mathcal{W}(G)\}$ is called
 a {\em chain decomposition} of $G$, and
the  {\em shadow} of a  Whitney cube $R\in\mathcal{W}(G)$ is 
\[
\mathcal{S}(R) = \{Q\in\mathcal{W}(G):\ R\in\mathcal{C}(Q)\}\,.
\]

The following proposition provides a chain decomposition
for the appropriate John domains.
\begin{proposition}(Chain decomposition)\label{p.admissible} Suppose
that $n\ge 2$,  $1<p<\infty$, and $0<s<n/p$.
Let $G$ be a  John domain in $\R^n$, with $\mathrm{dim}_{\mathcal{A}}(\partial G) < n-sp$.  Then
there exist constants $\sigma,\tau\in\N$ and a chain
decomposition $\{\mathcal{C}(Q):\ Q\in\mathcal{W}(G)\}$ of $G$ satisfying
the following conditions:
\begin{itemize}
\item[(1)] $\ell(Q)\le 2^{\tau}\ell(R)$ for each $Q\in\mathcal{W}(G)$ and $R\in\mathcal{C}(Q)$;
\item[(2)] $\sharp \{R\in\mathcal{W}_j(G):\ R\in\mathcal{C}(Q)\}\le
 2^ \tau$ for each $Q\in \mathcal{W}(G)$ and $j\in \Z$;
\item[(3)] The following inequality holds,
\begin{equation}\label{e.useful}
\sup_{j\in \Z}\sup_{R\in\mathcal{W}_j(G)}
\frac{1}{\lvert R\lvert^{1-sp/n}}
\sum_{k=j-\tau}^\infty \sum_{\substack{Q\in \mathcal{W}_k(G) \\ Q\in\mathcal{S}(R) }} \lvert Q\rvert^{1-sp/n}
(\tau+1+k-j)^{p}  < \sigma\,.
\end{equation}
\end{itemize}
The constants $\sigma$ and $\tau$ depend only
on $n$, $p$, $s$, $\partial G$, and the John constant $\beta_G$.
\end{proposition}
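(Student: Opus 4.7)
The plan is to build the chain from the John curves, verify the combinatorial size properties~(1) and~(2) by direct bookkeeping, and establish the weighted summability~(3) by first spatially confining the shadow of a Whitney cube and then invoking the Aikawa dimension condition.

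I would fix a John center $x_0\in G$ lying in a Whitney cube $Q_0\in\mathcal{W}(G)$. For every $Q\in\mathcal{W}(G)$, pick a point $x\in Q$ and a John curve $\gamma\colon[0,\ell]\to G$ with $\gamma(0)=x$ and $\gamma(\ell)=x_0$. Starting from $t_0=0$, I would inductively select times $0=t_0<t_1<\cdots<t_m=\ell$ so that the Whitney cubes $R_i\ni\gamma(t_i)$ are pairwise distinct, consecutive cubes satisfy the overlap bound~\eqref{p.2}, and $R_m=Q_0$; the reversed list $\mathcal{C}(Q)=(R_m,\ldots,R_0)$ is then the chain ending at $Q=R_0$.

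The John estimate $\dist(\gamma(t),\partial G)\ge t/\beta_G$ together with the Whitney relation $\ell(R_i)\sim\dist(\gamma(t_i),\partial G)$ gives $\ell(R_i)\gtrsim t_i/\beta_G$. Since $t_0=0$ and $\ell(R_0)=\ell(Q)$, every cube of the chain has side length at least a uniform multiple of $\ell(Q)$, which yields property~(1) with a suitable $\tau=\tau(n,\beta_G)$. Property~(2) follows because the arc length of the portion of $\gamma$ on which $\dist(\gamma(t),\partial G)$ lies in a fixed dyadic range around $2^{-j}$ is at most $C\beta_G 2^{-j}$, so only a bounded number of distinct Whitney cubes of generation $j$ can meet that portion.

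The main obstacle is property~(3), and this is where the dimension hypothesis enters decisively. The key preliminary step is spatial confinement of the shadow: if the Whitney cube $R=R_i$ appears in $\mathcal{C}(Q)$, the subarc of $\gamma$ from $\gamma(t_i)\in R$ back to $\gamma(0)=x\in Q$ has length $t_i\le C\beta_G\ell(R)$, so every $Q\in\mathcal{S}(R)$ lies in $B(x_R,C'\ell(R))$ for a fixed $x_R\in R$. Since $\dist(x_R,\partial G)\sim\ell(R)$, one may enlarge the ball and pick $y_R\in\partial G$ with
\[
\bigcup_{Q\in\mathcal{S}(R)} Q^*\subset B(y_R,C''\ell(R)).
\]
Because $\mathrm{dim}_{\mathcal{A}}(\partial G)<n-sp$, I can fix $\varepsilon>0$ so small that $d:=n-sp-\varepsilon$ still belongs to $\mathcal{A}(\partial G)$. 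The logarithmic weight is absorbed by $(\tau+1+k-j)^p\le C_\varepsilon\,2^{\varepsilon(k-j)}$; the Whitney identity $\ell(Q)^{n-sp-\varepsilon}\sim\int_{Q^*}\dist(x,\partial G)^{d-n}\,dx$ together with the bounded overlap of the $Q^*$'s converts the dyadic sum to an integral, and \eqref{e.assouad} then yields
\begin{align*}
\sum_{k\ge j-\tau}\sum_{\substack{Q\in\mathcal{W}_k(G)\\ Q\in\mathcal{S}(R)}}|Q|^{1-sp/n}(\tau+1+k-j)^p
&\le C_\varepsilon\,\ell(R)^{\varepsilon}\int_{B(y_R,C''\ell(R))}\dist(x,\partial G)^{d-n}\,dx\\
&\le C\,\ell(R)^{\varepsilon}\ell(R)^{d}=C\,\ell(R)^{n-sp},
\end{align*}
which is the uniform bound $\sigma\,|R|^{1-sp/n}$ required in~\eqref{e.useful}.
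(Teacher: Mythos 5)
Your proposal is correct and, for the substantive part of the argument, takes essentially the same route as the paper. For condition~(3) you do exactly what the paper does: fix $\epsilon>0$ with $n-sp-\epsilon\in\mathcal{A}(\partial G)$, absorb the weight $(\tau+1+k-j)^p$ into a factor comparable to $(\ell(R)/\ell(Q))^\epsilon$, use the spatial confinement $\bigcup_{Q\in\mathcal{S}(R)}Q\subset B(y_R,C\ell(R))$ with $y_R\in\partial G$ (which the paper records as display~\eqref{e.standard}), convert the dyadic sum over the shadow into an integral of $\dist(\cdot,\partial G)^{-sp-\epsilon}$ via the Whitney estimate~\eqref{dist_est} and bounded overlap, and finally apply the Aikawa condition~\eqref{e.assouad}. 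The only structural difference is that the paper delegates the chain construction and the verification of~(1) and~(2) to \cite{h-smv}, whereas you supply a sketch for them; this is additional detail, not a different method.

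One small imprecision in your sketch of~(1): the John lower bound $\ell(R_i)\gtrsim t_i/\beta_G$ by itself says nothing for small $t_i$ (it vanishes at $t_i=0$), so the clause ``since $t_0=0$ and $\ell(R_0)=\ell(Q)$, every cube of the chain has side length at least a uniform multiple of $\ell(Q)$'' does not follow from what precedes it. The standard completion is to combine the John estimate with the trivial bound $\dist(\gamma(t),\partial G)\ge\dist(x,\partial G)-t$ valid near $t=0$: then
\[
\dist(\gamma(t),\partial G)\ge\max\Bigl\{\tfrac{t}{\beta_G},\ \dist(x,\partial G)-t\Bigr\}\ge\frac{\dist(x,\partial G)}{1+\beta_G}\gtrsim\ell(Q)\,,
\]
which gives~(1). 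This is a routine fix and does not affect the soundness of your overall approach.
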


\begin{proof}
For the construction of chain decomposition and the verification of
conditions (1) and (2), we refer to \cite{h-smv}.
Therein one may also find a proof of the following useful fact.
There is
a constant $C=C(n,\beta_G)>0$ such that, for each $R\in\mathcal{W}(G)$,
\begin{equation}\label{e.standard}
\bigcup_{Q\in \mathcal{S}(R)} Q \subset B(y_R, C\ell(R)),
\end{equation}
where $y_R\in\partial G$ is any point satisfying $\lvert x_R-y_R\rvert
=\dist(x_R,\partial G)$.

It remains to check condition (3).  
Let us fix $\epsilon>0$, depending on the allowed parameters, such that $n-sp-\epsilon\in\mathcal{A}(\partial G)$. 
This can be done since, by the assumption, $\mathrm{dim}_{\mathcal{A}} (\partial G)<n-sp$.
Fix $j\in\Z$ and $R\in\mathcal{W}_j(G)$.
 Then, if
$k\ge j-\tau$ and $Q\in\mathcal{W}_k(G)$,
\begin{equation} \label{i.test}
\begin{split}
  \left(\frac{\ell (Q)}{\ell (R)}\right)^{\epsilon} (\tau + 1 + k - j)^p &= 2^{(\tau +1)\epsilon}
  2^{-(\tau + 1+k-j)\epsilon} (\tau + 1 + k-j)^p \le C2^{\tau \epsilon}\,,
  \end{split}
\end{equation}
where $C=C(\epsilon,p)>0$. By inequality~\eqref{i.test},
\begin{align*}
  \sum_{k=j-\tau}^\infty \sum_{\substack{Q\in \mathcal{W}_k(G)
      \\ Q\in\mathcal{S}(R) }}
  \left(\frac{\ell(Q)}{\ell(R)}\right)^{n-sp} (\tau+1+k-j)^{p} \le
  C2^{\tau \epsilon}
  \ell(R)^{-(n-sp-\epsilon)}\sum_{Q\in\mathcal{S}(R)}
  \ell(Q)^{n-sp-\epsilon}\,.
\end{align*}
On the other hand, by \eqref{dist_est}, \eqref{e.standard}, and \eqref{e.assouad}, we may
conclude that
\begin{equation*}
  \sum_{Q\in\mathcal{S}(R)} \ell(Q)^{n-sp-\epsilon} 
%
%
\le C\int_{B(y_R,C\ell(R))} \dist(x,\partial
G)^{(n-sp-\epsilon)-n}\,dx \leq C\ell(R)^{n-sp-\epsilon}\,,
\end{equation*}
and condition (3) follows.
\end{proof}

\subsection*{Hardy inequality for John domains} We are ready to verify one of our main results.


\begin{proof}[Proof of Theorem \ref{t.john_hardy}]
Let $\{\mathcal{C}(Q)\}$ be a chain decomposition given by Proposition \ref{p.admissible},
with a fixed cube $Q_0\in\mathcal{W}(G)$. 
Without loss of generality, we may  assume the normalisation $f_{Q_0^*} =0$. Indeed, if necessary,
we replace $f$ with $f-f_{Q_0^*}\chi_G$ in the proof below, and use the bound on the
Aikawa dimension to control the error term. 

Let us estimate
\begin{equation}\label{e.terms_basic}
\begin{split}
&\int_{G} \frac{\lvert f(x)\rvert^p}{\mathrm{dist}(x,\partial{G})^{sp}}\,dx  \\&\lesssim \sum_{Q\in \mathcal{W}(G)} \ell(Q)^{n-sp}\fint_{Q^*}
\lvert f(x)- f_{Q^*}\rvert^p\,dx + \sum_{Q\in\mathcal{W}(G)} \ell(Q)^{n-sp}\lvert f_{Q^*}\rvert^p\,.
\end{split}
\end{equation}
By H\"older's inequality, 
and the facts   $\lvert x-y\rvert\lesssim \ell(Q^*)$ for $(x,y)\in Q^*\times Q^*$
and $\sum_{Q\in\mathcal{W}(G)} \chi_{Q^*}\lesssim \chi_{G}$,
the first term on the right hand side is bounded by
\begin{equation}\label{e.first_basic}
\begin{split}
&\sum_{Q\in\mathcal{W}(G)} \ell(Q)^{n-sp}\fint_{Q^*} \fint_{Q^*} \lvert f(x)-f(y)\rvert^p\,dy\,dx
\\&\lesssim \sum_{Q\in\mathcal{W}(G)} \int_{Q^*} \int_{Q^*} \frac{\lvert f(x)-f(y)\rvert^p}{\lvert x-y\rvert^{n+sp}} \,dy\,dx\lesssim \int_G \int_G \frac{\lvert f(x)-f(y)\rvert^p}{\lvert x-y\rvert^{n+sp}}\,dy\,dx\,.
\end{split}
\end{equation}
In order to control the remaining term, let us first prove some auxiliary estimates.
For a Whitney cube $Q\in\mathcal{W}(G)$, consider its chain $\mathcal{C}(Q)$. By inequality \eqref{p.2},
if $j\in \{1,\ldots,m\}$,
\begin{align*}
\lvert f_{Q_j^*} - f_{Q_{j-1}^*}\rvert = \fint_{Q_j^*\cap Q_{j-1}^*} \lvert f_{Q_j^*}- f_{Q_{j-1}^*}\rvert \,dx
\lesssim \sum_{i=j-1}^j \fint_{Q_i^*} \lvert f(x)-f_{Q_i^*}\rvert\,dx
\end{align*}
By normalisation $f_{Q_0^*}=0$ and the property
that cubes in the chain $\mathcal{C}(Q)$ are distinct,
\begin{equation}\label{e.collect}
\begin{split}
\lvert f_{Q^*}\rvert &= \lvert f_{Q_m^*}-f_{Q_0^*}\rvert \\&\lesssim \sum_{j=1}^m \sum_{i=j-1}^j\fint_{Q_i^*} \lvert f(x)-f_{Q_i^*}\rvert\,dx
\lesssim \sum_{R\in\mathcal{C}(Q)} \fint_{R^*} \lvert f(x)-f_{R^*}\rvert\,dx\,.
\end{split}
\end{equation}
We are ready to estimate the second term in the right hand side of \eqref{e.terms_basic}.
First we will use inequality \eqref{e.collect} and property (1) of the chain $\mathcal{C}(Q)$.
Then we will write $1=(\tau+1+k-j)^{-1}(\tau+1+k-j)$ and apply H\"older's inequality,
\begin{equation}\label{e.subs}
\begin{split}
&\sum_{Q\in\mathcal{W}(G)} \ell(Q)^{n-sp}\lvert f_{Q^*}\rvert^p\\
&\lesssim \sum_{k=-\infty}^\infty \sum_{Q\in \mathcal{W}_k(G)} \ell(Q)^{n-sp}
\bigg\{\sum_{j=-\infty}^{k+\tau} \sum_{\substack{R\in\mathcal{W}_j(G) \\ R\in \mathcal{C}(Q) }}
\fint_{R^*} \lvert f-f_{R^*}\rvert \bigg\}^p\\
&\lesssim \sum_{k=-\infty}^\infty \sum_{Q\in \mathcal{W}_k(G)} \ell(Q)^{n-sp}
\sum_{j=-\infty}^{k+\tau} (\tau+1+k-j)^p \bigg\{\sum_{\substack{R\in\mathcal{W}_j(G)\\ R\in \mathcal{C}(Q) }}
\fint_{R^*} \lvert f-f_{R^*}\rvert \bigg\}^p\,.
\end{split}
\end{equation}
By property (2) of chain $\mathcal{C}(Q)$ and H\"older's inequality, for any $j\in \Z$,
\begin{align*}
\sum_{\substack{R\in\mathcal{W}_j(G)\\ R\in  \mathcal{C}(Q) }}
\fint_{R^*} \lvert f-f_{R^*}\rvert
 &\lesssim \bigg\{ \sum_{\substack{R\in\mathcal{W}_j(G)\\ R\in \mathcal{C}(Q) }}
 \fint_{R^*} \lvert f-f_{R^*}\rvert^p\bigg\}^{1/p}\,.
\end{align*}
We substitute the last inequality to \eqref{e.subs}. Next,
we change the order of summation, and apply an equivalence for Whitney cubes:
$R\in\mathcal{C}(Q)$ if and only if $Q\in\mathcal{S}(R)$,
\begin{align*}
\sum_{Q\in\mathcal{W}(G)} \ell(Q)^{n-sp}\lvert f_{Q^*}\rvert^p
&\lesssim \sum_{k=-\infty}^\infty \sum_{Q\in \mathcal{W}_k(G)} \ell(Q)^{n-sp}
\sum_{j=-\infty}^{k+\tau} (\tau+1+k-j)^p  \sum_{\substack{R\in\mathcal{W}_j(G) \\ R\in  \mathcal{C}(Q) }}
 \fint_{R^*} \lvert f-f_{R^*}\rvert^p\\
 &= \sum_{j=-\infty}^\infty \sum_{R\in \mathcal{W}_j(G)} \ell(R)^{n-sp}  \fint_{R^*} \lvert f-f_{R^*}\rvert^p
\cdot \mathbf{A}_{j,R}\,.
\end{align*}
The constants
\[
\mathbf{A}_{j,R} = \sum_{k=j-\tau}^\infty \sum_{\substack{Q\in\mathcal{W}_k(G) \\ Q\in  \mathcal{S}(R) }} 
\bigg(\frac{\ell(Q)}{\ell(R)}\bigg)^{n-sp}(\tau+1+k-j)^p\,,\qquad j\in \Z\,,R\in\mathcal{W}_j(G)\,,
\]
are uniformly bounded in $j$ and $R$ by condition (3) in Proposition \ref{p.admissible}. 
Applying inequalities \eqref{e.first_basic} finishes the proof.
\end{proof}

\begin{remark}
The main line of the proof above is similar to the proof of the following
well known Hardy inequality for series.
If  $1\le p\le  \infty$ and $a_j\geq 0$, $j=0,1,\dots$, then
\begin{equation}\label{leindler}
\sum_{j=0}^\infty 2^{\sigma j}\bigg(\sum_{i=0}^j a_i\bigg)^p\le c\sum_{j=0}^\infty 2^{\sigma j}a_j^p \,\,\,\,\,\,\text{for}\,\,\sigma<0\,.
\end{equation}
See e.g. \cite{Leindler}
\end{remark}

\section{Proof of Theorem \ref{t.john_hardy_TL}}

In this section, we prove our main result.

\subsection*{Chain decomposition}
Suppose
$G$ is a bounded or unbounded domain in $\R^n$, $n\ge 2$, with a compact boundary
satisfying $\mathrm{dim}_{\mathcal{A}}(\partial G)<n-sp$,
where $1<p<\infty$ and $0<s<n/p$.
Let $\gamma=7\sqrt n$, and recall
 definition \eqref{c_definition} of $\mathcal{C}:=\mathcal{C}_{\partial G,\gamma}$.

By scaling and translating $G$, if necessary, we
may assume that there
is a dyadic cube $Q_0\in\mathcal{D}_0$ 
such that $Q\subset Q_0$ if 
\[Q\in\mathcal{W}^{G\textup{-small}}:=\{Q\in\mathcal{W}(G)\,:\, \ell(Q)\le \diam(\partial G)\}\,.
\]
In particular, this implies a relation $\partial G\subset Q_0$.
For a small Whitney cube $Q\in\mathcal{W}^{G\textup{-small}}$, we
let 
\[
\mathcal{C}(Q)=(Q_0,\ldots,Q_m)\subset \mathcal{C}\]
 be the unique chain of dyadic cubes
such that $Q_m=Q$
and $Q_{j-1}$ is the dyadic parent of $Q_{j}$, $j=1,\ldots,m$.
In particular,
\[Q\in\mathcal{D}_m\,.\]
The shadow of a  cube $R\in \mathcal{C}$ is
$\mathcal{S}(R) = \{Q\in\mathcal{W}^{G\textup{-small}}\,:\, R\in\mathcal{C}(Q)\}$.
Observe that
$\cup_{Q\in\mathcal{S}(R)} Q\subset R$ for all $R\in\mathcal{C}$.

\subsection*{Projection operators}
We also need certain projection operators.
For a cube $Q$ in $\R^n$ and $k\in \N_0$, we let
$P_{k,Q}$
 be a
projection from $L^1(Q)$ to $\mathcal{P}_{k-1}$ 
such that for every $1\le u\le \infty$ and every $f\in L^u(Q)$,
\begin{equation}\label{e.equiv}
\bigg(\fint_{Q}\lvert f(x)-P_{k,Q}f (x) \rvert^u\,dx\bigg)^{1/u}
\le C\mathcal{E}_k(f,Q)_{L^u(\R^n)}\,,
\end{equation}
where a constant $C$ depends on $ n$ and $k$.
For the  construction of these projection operators, we refer to
\cite[Proposition 3.4]{Shvartsman} and \cite{DeVoreSharpley}.

%

\begin{proposition}\label{chain0}
Suppose that $k\in \N_0$ and $Q\in\mathcal{W}^{G\textup{-small}}$. Then, for every $f\in L^1_{\textup{loc}}(\R^n)$,
\[
\lVert P_{k,Q} f-P_{k,Q_0} f \rVert_{L^\infty(Q)}\le C\sum_{R\in\mathcal{C}(Q)} \mathcal{E}_k(f,R)_{L^1(\R^n)}\,,
\]
where a constant $C$ depends on $n$ and $k$.
\end{proposition}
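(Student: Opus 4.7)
The plan is to telescope the difference along the chain $\mathcal{C}(Q)=(Q_0,\ldots,Q_m)$ and estimate each consecutive difference using the reverse $L^\infty$--$L^1$ estimate for polynomials of bounded degree.

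First I would write
\[
P_{k,Q}f-P_{k,Q_0}f=\sum_{j=1}^m\bigl(P_{k,Q_j}f-P_{k,Q_{j-1}}f\bigr),
\]
where $Q_m=Q$ and each $Q_j$ is a dyadic child of $Q_{j-1}$, so in particular $Q\subset Q_j\subset Q_{j-1}$ for all $j$. The key observation is that each summand is a polynomial of degree at most $k-1$. For such polynomials there is a standard Markov-type / reverse-H\"older inequality
\[
\lVert P\rVert_{L^\infty(Q_j)}\le C(n,k)\,\fint_{Q_j}\lvert P(x)\rvert\,dx,\qquad P\in\mathcal{P}_{k-1},
\]
obtained by rescaling the estimate from the unit cube via an affine change of variables. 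Applying this to $P=P_{k,Q_j}f-P_{k,Q_{j-1}}f$ and using $Q\subset Q_j$ gives
\[
\lVert P_{k,Q_j}f-P_{k,Q_{j-1}}f\rVert_{L^\infty(Q)}\le C\fint_{Q_j}\lvert P_{k,Q_j}f-P_{k,Q_{j-1}}f\rvert\,dx.
\]

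Next I would split the integrand via the triangle inequality and estimate the two pieces separately. The first piece yields
\[
\fint_{Q_j}\lvert f-P_{k,Q_j}f\rvert\,dx\le C\,\mathcal{E}_k(f,Q_j)_{L^1(\R^n)}
\]
directly from property \eqref{e.equiv} with $u=1$. For the second piece I use that $Q_{j-1}$ is the dyadic parent of $Q_j$, so $\lvert Q_{j-1}\rvert=2^n\lvert Q_j\rvert$, hence
\[
\fint_{Q_j}\lvert f-P_{k,Q_{j-1}}f\rvert\,dx\le 2^n\fint_{Q_{j-1}}\lvert f-P_{k,Q_{j-1}}f\rvert\,dx\le C\,\mathcal{E}_k(f,Q_{j-1})_{L^1(\R^n)},
\]
again by \eqref{e.equiv}. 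Summing the telescoped chain from $j=1$ to $m$ and regrouping by the cubes $Q_0,Q_1,\ldots,Q_m\in\mathcal{C}(Q)$ yields the claimed bound
\[
\lVert P_{k,Q}f-P_{k,Q_0}f\rVert_{L^\infty(Q)}\le C\sum_{R\in\mathcal{C}(Q)}\mathcal{E}_k(f,R)_{L^1(\R^n)}.
\]

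The only potential obstacle is the $L^\infty$--$L^1$ equivalence for polynomials on a cube, but this is a routine consequence of the finite dimensionality of $\mathcal{P}_{k-1}$ combined with affine rescaling from the unit cube, with a constant depending only on $n$ and $k$. Everything else is a straightforward telescoping argument exploiting the dyadic nesting of the chain.
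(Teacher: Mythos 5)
Your proof is correct and follows essentially the same route as the paper: telescope $P_{k,Q}f-P_{k,Q_0}f$ along the dyadic chain, apply the reverse $L^\infty$--$L^1$ estimate for polynomials in $\mathcal{P}_{k-1}$ to each consecutive difference, split by the triangle inequality and invoke \eqref{e.equiv}. The only cosmetic difference is that the paper uses the monotonicity property \eqref{eqMonotonyOfLocalApproxSset} to bound the $j$-th consecutive difference by $\mathcal{E}_k(f,Q_{j-1})_{L^1(\R^n)}$ alone, whereas you keep both $\mathcal{E}_k(f,Q_j)_{L^1(\R^n)}$ and $\mathcal{E}_k(f,Q_{j-1})_{L^1(\R^n)}$ before regrouping the sum; the two bookkeeping choices lead to the same final bound.
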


%
%

\begin{proof} 
Recall that $\mathcal{C}(Q)=(Q_0,\ldots,Q_m)$, with $Q_m=Q\in\mathcal{W}^{G\textup{-small}}$.
We claim that, for every $j\in \{1,\ldots,m\}$,
\begin{equation}\label{e.thisest}
\lVert P_{k,Q_j} f - P_{k,Q_{j-1}} f\rVert_{L^\infty(Q_j)}
\le C\mathcal{E}_{k}(f,Q_{j-1})_{L^1(\R^n)}\,,
\end{equation}
where $C$ depends on $n$ and $k$.
Indeed, 
let us first recall that $Q_j\subset Q_{j-1}$, and $2\ell(Q_j)=\ell(Q_{j-1})$.
By
a reverse H\"older inequality for polynomials
\cite[\S 3]{DeVoreSharpley}, and inequalities \eqref{e.equiv} and
\eqref{eqMonotonyOfLocalApproxSset},
\begin{multline*}
\lvert Q_j\rvert\lVert P_{k,Q_j} f - P_{k,Q_{j-1}} f\rVert_{L^\infty(Q_j)}
\lesssim \lVert P_{k,Q_j} f-P_{k,Q_{j-1}} f \rVert_{L^1(Q_j)}\\\le
\lVert f- P_{k,Q_j} f\rVert _{L^1(Q_j)}+\lVert f -P_{k,Q_{j-1}} f\rVert_{L^1(Q_{j-1})}
\lesssim \lvert Q_{j-1}\rvert \mathcal{E}_k(f,Q_{j-1})_{L^1(\R^n)}\,.
\end{multline*}
Thus, inequality \eqref{e.thisest} follows.

Since $Q\subset Q_j$
for every $j=0,\ldots,m$,
by inequality \eqref{e.thisest},
\begin{align*}
\lVert P_{k,Q} f-P_{k,Q_0} f\rVert_{L^\infty(Q)}
&\le \sum_{j=1}^{m}\lVert P_{k,Q_{j}} f-P_{k,Q_{j-1}} f\rVert_{L^\infty(Q_{j})}
\le C\sum_{j=0}^m \mathcal{E}_{k}(f,Q_j)_{L^1(\R^n)}.
\end{align*}
This concludes the proof of the proposition.
\end{proof}

\subsection*{Proof of Statement (A)}\label{s.proof_a}
We rewrite statement (A) as the following proposition.

\begin{proposition}
Let $n\ge 2$,  $1<p<\infty$, and $0<s<n/p$.
Suppose $G$ is a bounded or unbounded domain in $\R^n$,
with a compact boundary satisfying $\mathrm{dim}_{\mathcal{A}}(\partial G)<n-sp$.
Then 
\begin{equation}\label{e.hsd}
\bigg(\int_{G} \frac{\lvert f(x)\rvert^p}{\mathrm{dist}(x,\partial{G})^{sp}}\,dx\bigg)^{1/p}
\lesssim \lVert f \rVert_{F^s_{pq}(G)}
\end{equation}
 for all $f\in F^{s}_{pq}(G)$ and $1\le q<\infty$.
The implied
constant in \eqref{e.hsd} depends on n, $s$, $p$, and $\partial G$.
\end{proposition}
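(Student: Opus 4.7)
The plan is to parallel the chain-decomposition proof of Theorem~\ref{t.john_hardy}, replacing the integral averages $f_{Q^*}$ by the polynomial projections $P_{k,Q}$ of \eqref{e.equiv} and the Whitney chain by the dyadic chain $\mathcal{C}(Q)$ introduced in this section. Fix an integer $k>s$, and for $f\in F^s_{pq}(G)$ pick an extension $g\in F^s_{pq}(\R^n)$ with $g|_G=f$. Since the embedding $F^s_{pq}(\R^n)\hookrightarrow F^s_{pp}(\R^n)$ holds for $q\le p$, I may reduce to $q\ge p$, so that the Triebel--Lizorkin characterization is available at $u=p$. I would then split the Hardy integral into contributions from Whitney cubes with $\ell(Q)>\diam(\partial G)$, where \eqref{dist_est} makes $\dist(\cdot,\partial G)$ bounded below and the contribution is $\lesssim\lVert g\rVert_{L^p(\R^n)}^p$, and from $Q\in\mathcal{W}^{G\textup{-small}}$, where \eqref{dist_est} lets me replace the weight by $\ell(Q)^{-sp}$. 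On each such small cube I insert the decomposition
\[
g=(g-P_{k,Q}g)+(P_{k,Q}g-P_{k,Q_0}g)+P_{k,Q_0}g
\]
to obtain three sums I, II, III.

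Sum III, bounded by $\sum_Q\ell(Q)^{n-sp}\lVert P_{k,Q_0}g\rVert_{L^\infty(Q)}^p$, is controlled by a reverse H\"older inequality for polynomials (yielding $\lVert P_{k,Q_0}g\rVert_{L^\infty(Q_0)}\lesssim\lVert g\rVert_{L^p(Q_0)}$) combined with the finiteness of $\sum_{Q\in\mathcal{W}^{G\textup{-small}}}\ell(Q)^{n-sp}$; the latter is obtained by selecting $\epsilon>0$ with $n-sp-\epsilon\in\mathcal{A}(\partial G)$, permitted by the Aikawa hypothesis together with Remark~\ref{r.compact}, and dominating the sum by an Aikawa integral over a ball containing $\partial G$. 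Sum II is the counterpart of \eqref{e.collect}--\eqref{e.subs} in the John-domain proof: Proposition~\ref{chain0} supplies $\lVert P_{k,Q}g-P_{k,Q_0}g\rVert_{L^\infty(Q)}\lesssim\sum_{R\in\mathcal{C}(Q)}\mathcal{E}_k(g,R)_{L^1(\R^n)}$; inserting the weight $1=(\log_2(\ell(R)/\ell(Q))+1)^{-1}(\log_2(\ell(R)/\ell(Q))+1)$ and applying H\"older in the chain index converts the $p$-th power of the sum into a weighted sum of $\mathcal{E}_k(g,R)_{L^1(\R^n)}^p$; swapping the order of summation to $Q\in\mathcal{S}(R)$, using $\bigcup_{Q\in\mathcal{S}(R)}Q\subset R$, and repeating the Aikawa computation at the end of Proposition~\ref{p.admissible} reduces sum II to $\sum_R\ell(R)^{n-sp}\mathcal{E}_k(g,R)_{L^1(\R^n)}^p$, which is bounded by sum I in view of $\mathcal{E}_k(g,R)_{L^1}\le\mathcal{E}_k(g,R)_{L^p}$. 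Sum I itself reads $\sum_Q\ell(Q)^{n-sp}\mathcal{E}_k(g,Q)_{L^p(\R^n)}^p$ by \eqref{e.equiv} with $u=p$.

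The central obstacle is the key estimate
\[
\sum_{R\in\mathcal{C}}\ell(R)^{n-sp}\mathcal{E}_k(g,R)_{L^p(\R^n)}^p\lesssim\lVert g\rVert_{F^s_{pq}(\R^n)}^p.
\]
The difficulty is that the left-hand side is summed with exponent $p$ while the Triebel--Lizorkin integrand $F$ is an $\ell^q$-type quantity in $t$; this is exactly where porosity of $\partial G$ (Remark~\ref{porous_obs}) and Theorem~\ref{reverse} play their role. Writing $a_R=\ell(R)^{-s}\mathcal{E}_k(g,R)_{L^p(\R^n)}$, the pointwise inequality $(\sum_R\chi_R a_R)^p\ge\sum_R\chi_R a_R^p$ bounds the left side by $\lVert\sum_R\chi_R a_R\rVert_{L^p(\R^n)}^p$; Theorem~\ref{reverse} upgrades this to $\lVert(\sum_R(\chi_R a_R)^q)^{1/q}\rVert_{L^p(\R^n)}^p$; and the monotonicity \eqref{eqMonotonyOfLocalApproxSset} of local approximation, combined with the disjointness of the $t$-intervals $[c\ell(R),2c\ell(R)]$ across dyadic levels, shows that this final integrand is pointwise $\lesssim F(x)^q$ in the Triebel--Lizorkin characterization at $u=p$, completing the argument.
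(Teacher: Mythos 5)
Your proposal follows essentially the same route as the paper: reduce to $q\ge p$ and use the local-approximation characterization with $u=p$; split into big and small Whitney cubes; on small cubes insert $P_{k,Q}g$ and $P_{k,Q_0}g$ to get three terms; control the $P_{k,Q_0}$-term via reverse H\"older for polynomials plus the Aikawa bound on $\sum\ell(Q)^{n-sp}$; control the chain term via Proposition~\ref{chain0}, the weighted H\"older trick from Theorem~\ref{t.john_hardy}'s proof, and the Aikawa computation; then finish with porosity and Theorem~\ref{reverse}. This matches the paper's structure closely.

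One small imprecision worth flagging: after reducing sum~II to $\sum_{R\in\mathcal{C}}\ell(R)^{n-sp}\mathcal{E}_k(g,R)_{L^1}^p$, you write that this is ``bounded by sum~I'' using $\mathcal{E}_k(g,R)_{L^1}\le\mathcal{E}_k(g,R)_{L^p}$. But sum~I runs over $Q\in\mathcal{W}^{G\textup{-small}}$ while the reduced sum~II runs over $R\in\mathcal{C}$, a genuinely larger family (the chain ancestors of Whitney cubes, which are \emph{not} themselves Whitney cubes). The $L^1\to L^p$ inequality changes the Lebesgue index, not the index set, so the reduced sum~II is not literally dominated by sum~I. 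What is true --- and what your ``central obstacle'' estimate supplies --- is that both sums are of the same shape and both are controlled by $\sum_{R\in\mathcal{C}}\ell(R)^{n-sp}\mathcal{E}_k(g,R)_{L^p}^p$ (small Whitney cubes lie in $\mathcal{C}$ since, by \eqref{dist_est}, $\dist(x_Q,\partial G)\le 6\sqrt n\,\ell(Q)<\gamma\ell(Q)$ with $\gamma=7\sqrt n$, and $\ell(Q)\le 1$ after normalization). The paper treats sum~I more economically, using only the bounded overlap of Whitney cubes without invoking Theorem~\ref{reverse}, but your unified treatment also works. Rephrasing ``bounded by sum~I'' as ``of the same form as sum~I, hence absorbed into the central estimate'' would make the step airtight.
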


\begin{proof}
The embeddings $F^{s}_{pq'}(G)\subset F^{s}_{pq}(G)$ are trivially bounded if $q'\le q$. Hence,
it suffices to consider the case of $p\le q<\infty$. 
This allows us to set $u=p$ 
and $k=[s]+1>s$ in the definition of Triebel--Lizorkin space $F^s_{pq}(\R^n)$.

Let $f\in F^s_{pq}(G)$. Then, by definition, it suffices to show that
\begin{equation}\label{tama}
\int_{G} \frac{\lvert g(x)\rvert^p}{\dist(x,\partial G)^{sp}}\,dx\lesssim 
\lVert g\rVert_{F^s_{pq}(\R^n)}^p\,,
\end{equation}
where 
 $g\in F^{s}_{pq}(\R^n)$ is any extension of $f$, that is, $g\vert_G = f$ almost everywhere.
By first  inequality in  \eqref{dist_est}, we can bound
the left hand side of \eqref{tama} by a constant multiple of
\begin{equation}\label{e.core}
 \sum _{\substack{Q\in \mathcal{W}(G)}}
\ell(Q)^{n-sp} \fint_Q \lvert g(x)-P_{k,Q}g(x)\rvert^p + \lvert P_{k,Q}g(x)\rvert^p\,dx\,.
\end{equation}
By inequalities \eqref{e.equiv} and
\eqref{eqMonotonyOfLocalApproxSset},
\begin{equation*}
\begin{split}
&\sum _{\substack{Q\in \mathcal{W}(G)}}
\ell(Q)^{n-sp} \fint_Q \lvert g(x)-P_{k,Q} g(x)\rvert^p\,dx\\
&\lesssim     \lVert g\rVert_{L^p(\R^n)}^p+\sum _{\substack{Q\in \mathcal{W}(G) \\ 2\ell(Q)\le 1}}
\ell(Q)^{n-sp} \mathcal{E}_k(g,Q)_{L^p(\R^n)}^p \\
&\lesssim \lVert g\rVert_{L^p(\R^n)}^p+ \bigg\lVert \bigg\{ \sum_{\substack{Q\in\mathcal{W}(G)\\2\ell (Q)\le 1}} \chi_Q \ell(Q)^{-sq}
\mathcal{E}_k(g,Q)_{L^p(\R^n)}^q\bigg\}^{1/q}\bigg\rVert_p^p
 \lesssim 
 \lVert g\rVert_{F^s_{pq}(\R^n)}^p\,.
\end{split}
\end{equation*}
In the penultimate step, we used the fact that every point in $\R^n$ belongs
to at most $C=C(n)$ Whitney cubes. And, the last step  follows from monotonicity
\eqref{eqMonotonyOfLocalApproxSset} of the local approximation.

Then we estimate the remaining term in  \eqref{e.core}, i.e.,
\begin{equation}\label{e.rem}
\sum_{Q\in\mathcal{W}(G)}\ell(Q)^{n-sp}\fint_{Q} \lvert P_{k,Q} g(x)\rvert^p\,dx\,.
\end{equation}
This series, when restricted to big cubes
$Q\in\mathcal{W}(G)$ satisfying $\ell(Q)>\diam(\partial G)$, is bounded
by $C\lVert g\rVert_{L^p(\R^n)}^p$. Indeed,
this is an easy consequence of
inequality \eqref{e.equiv}.

Let us estimate  the remaining part of series \eqref{e.rem}, where
the summation is restricted to small cubes $\mathcal{W}^{G\textup{-small}}$.
In order to do this, we write $P_{k,Q} g = P_{k,Q_0}g +(P_{k,Q}g - P_{k,Q_0} g)$,
and estimate the resulting two series, denoted by $S_1$ and $S_2$.
First, by a reverse H\"older inequality for polynomials and the
assumption $\mathrm{dim}_{\mathcal{A}}(\partial G) < n-sp$,
\begin{align*}
S_1:=\sum_{Q\in\mathcal{W}^{G\textup{-small}}} \ell(Q)^{n-sp} &\fint_Q \lvert P_{k,Q_0}g(x)\rvert^p\,dx 
\le \lVert P_{k,Q_0} g\rVert_{L^\infty(Q_0)}^p \sum_{Q\in\mathcal{W}^{G\textup{-small}}}
\ell(Q)^{n-sp}\\
&\lesssim \lVert P_{k,Q_0} g\rVert_{L^p(Q_0)}^p  \int_{Q_0}\dist(x,\partial G)^{-sp}\,dx
\lesssim \lVert g\rVert_{L^p(\R^n)}^p\,.
\end{align*}
By Proposition \ref{chain0},
\begin{equation*}
\begin{split}
S_2:&=\sum_{Q\in\mathcal{W}^{G\textup{-small}}}\ell(Q)^{n-sp} \fint_Q \lvert P_{k,Q}g(x) - P_{k,Q_0}g(x)\rvert^p\,dx \\
&\lesssim  \sum_{m=0}^\infty \sum_{Q\in \mathcal{W}_m^{G\textup{-small}}} \ell(Q)^{n-sp} 
\bigg\{ \sum_{j=0}^m (1+m-j)^{-1}(1+m-j)
\fint_{R_j^Q} \lvert g(x)-P_{k,R_j^Q} g(x)\rvert\,dx \bigg\}^p
\end{split}
\end{equation*}
where we use  notation ${R}^Q_j$ for the unique cube $R\in \mathcal{C}(Q)\cap\mathcal{D}_j$.
Next, proceeding as in the proof of Theorem \ref{t.john_hardy}, 
we obtain
\begin{equation}\label{e.tag}
\begin{split}
S_2 &\lesssim \sum_{j=0}^\infty \sum_{R\in \mathcal{C}\cap\mathcal{D}_j} \ell(R)^{n-sp} \fint_{R} \lvert g(x)-P_{k,R} g(x)\rvert^p\,dx
\cdot \mathbf{A}'_{j,R}
\\&\lesssim \sum_{R\in\mathcal{C}} \ell(R)^{n-sp} 
\fint_{R} \lvert g(x)-P_{k,R} g(x)\rvert^p\,dx\,.
\end{split}
\end{equation}
Here the uniform boundedness in $j$ and $R$ of the constants $\mathbf{A}'_{j,R}$
can be easily shown as in the proof of Proposition \ref{p.admissible}.

By 
Inequality \eqref{e.equiv}, 
Remark \ref{porous_obs}, and a reverse H\"older inequality
 in Theorem \ref{reverse}, we can bound term $S_2$ by
a constant multiple of
\begin{equation*}
\begin{split}
\bigg\lVert \bigg\{ \sum_{R\in\mathcal{C}} \chi_R \ell(R)^{-sp}
\mathcal{E}_k(g,R)_{L^p(\R^n)}^p\bigg\}^{1/p}\bigg\rVert_p^p
\lesssim  \bigg\lVert \bigg\{ \sum_{R\in\mathcal{C}} \chi_R \ell(R)^{-sq}
\mathcal{E}_k(g,R)_{L^p(\R^n)}^q\bigg\}^{1/q}\bigg\rVert_p^p\,,
\end{split}
\end{equation*}
where $\mathcal{C}=\mathcal{C}_{\partial G,\gamma}$.
The observation that the last term is dominated
by the required upper bound $C\lVert g\rVert_{F^s_{pq}(\R^n)}^p$ finishes the proof.
\end{proof}

\subsection*{Proof of Statement (B)}\label{s.proof_b}
This statement is covered by the following proposition.

\begin{proposition}\label{t.hardy_admissible_general}
Let $n\ge 2$,  $1\le p<\infty$, and $0<s<n/p$.
Suppose that $G$ is domain in $\R^n$, with a compact boundary,
such that $\R^n\setminus G$ has zero Lebesgue measure,
and inequality
\begin{equation}\label{e.Hardy}
\bigg(\int_{G} \frac{\lvert f(x)\rvert^p}{\mathrm{dist}(x,\partial{G})^{sp}}\,dx\bigg)^{1/p}
\lesssim \lVert f \rVert_{F^s_{pq}(G)}
\end{equation}
holds for some $1\le q\le \infty$ and
 for all $f\in F^{s}_{pq}(G)$.
 Then $\mathrm{dim}_{\mathcal{A}}(\partial G)< n-sp$.
\end{proposition}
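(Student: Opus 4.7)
The plan is to test the hypothesized Hardy inequality \eqref{e.Hardy} against a family of smooth bumps concentrated at boundary points. This will yield the Aikawa integral inequality \eqref{e.assouad} at the critical exponent $n - sp$, which combined with an openness/self-improvement argument should deliver the strict dimensional bound $\dim_{\mathcal{A}}(\partial G) < n - sp$.

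Concretely, I would fix $\phi \in C_0^\infty(\R^n)$ with $\phi \equiv 1$ on $B(0,1/2)$ and $\supp \phi \subset B(0,1)$, and for each $x \in \partial G$ and $r \in (0, 1]$ set $f_{x,r}(y) = \phi((y-x)/r)$. The standard scaling of the Triebel--Lizorkin norm on $\R^n$ produces $\lVert f_{x,r}\rVert_{F^s_{pq}(\R^n)} \lesssim r^{n/p - s}$ uniformly in $x$ and $r$; since $F^s_{pq}(G)$ is defined as a quotient, the same bound controls $\lVert f_{x,r}|_G\rVert_{F^s_{pq}(G)}$. Applying \eqref{e.Hardy} to $f_{x,r}|_G$, bounding the left-hand side from below by integrating over $B(x,r/2)$ where $\phi \ge 1$, and using that $\R^n \setminus G$ has measure zero to replace $G$ by $\R^n$ in the integral, I obtain
\begin{equation*}
\int_{B(x,r/2)} \dist(y,\partial G)^{-sp}\,dy \le C\, r^{n - sp}
\end{equation*}
uniformly in $x \in \partial G$ and $r \in (0,1]$. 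Rescaling the radius and invoking Remark \ref{r.compact} (which uses the compactness of $\partial G$) gives $n - sp \in \mathcal{A}(\partial G)$, hence $\dim_{\mathcal{A}}(\partial G) \le n - sp$.

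The main obstacle is the passage from this non-strict bound to the strict inequality asserted in the proposition. My plan here is to exploit an openness property of $\mathcal{A}(\partial G)$ at its infimum, namely that whenever the Aikawa integral bound holds at some exponent $t$, it already holds at some $t' < t$. Concretely, I would first extract from the bound above the packing estimate $\lvert B(x,r) \cap \{\dist(\cdot,\partial G) \le \lambda r\}\rvert \le C\lambda^{sp} r^n$, valid uniformly for $\lambda \in (0,1]$ and $x \in \partial G$. Then, via Remark \ref{porous_obs} (the identification of the Aikawa and Assouad dimensions), I would appeal to the classical self-improvement of the Assouad-dimension condition to upgrade this to a packing with exponent $sp + \delta$ for some $\delta > 0$. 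Reinserting the improved packing into the dyadic-annular decomposition of $\int_{B(x,r/2)} \dist(\cdot,\partial G)^{-sp-\eta}\,dy$ for $0 < \eta < \delta$ then yields $n - sp - \eta \in \mathcal{A}(\partial G)$, as required. An alternative direct route would be to test Hardy with a bump multiplied by a small negative power of the regularized distance $\dist(\cdot,\partial G) + \varepsilon$ and send $\varepsilon \to 0^+$, but controlling the Triebel--Lizorkin norm of such a product appears to require essentially the same dimension-theoretic input.
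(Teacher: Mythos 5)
Your derivation of the non-strict bound $n-sp\in\mathcal{A}(\partial G)$ reproduces the paper's argument exactly: the same bump function, the same scaling estimate $\lVert\varphi((\cdot-x)/r)\rVert_{F^s_{pq}(\R^n)}\simeq r^{n/p-s}$ (the paper cites \cite[Corollary 5.16]{T3}), the same use of the hypothesis $\lvert\R^n\setminus G\rvert=0$ to pass the integral from $G$ to $B(x,r/2)$, and the same appeal to Remark \ref{r.compact}.

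The gap is in the promotion to a strict inequality. You propose to convert the Aikawa bound into the packing estimate $\lvert B(x,r)\cap\{\dist(\cdot,\partial G)\le\lambda r\}\rvert\lesssim\lambda^{sp}r^n$ and then invoke a ``classical self-improvement of the Assouad-dimension condition'' to raise the packing exponent to $sp+\delta$. No such self-improvement holds at the level of packing (equivalently, covering) exponents: an Ahlfors $(n-sp)$-regular set has packing exponent exactly $sp$ and it cannot be improved. Note, however, that such a set does \emph{not} satisfy the Aikawa condition at $n-sp$ --- the integral $\int_{B(x,r)}\dist(y,E)^{-sp}\,dy$ diverges there. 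This shows that the passage from the Aikawa integral bound to the packing bound is strictly lossy, and what you discard in making it is precisely the extra information that drives the self-improvement. An appeal to Remark \ref{porous_obs} cannot recover it either: that remark identifies the two dimensions (infima), not the exponent sets at the critical value.

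The paper keeps the Aikawa integral condition intact and, following \cite{KoskelaZhong}, derives from it a reverse H\"older inequality on \emph{every} ball $B=B(x,r)\subset\R^n$,
\begin{equation*}
\bigg(\fint_{B}\dist(y,\partial G)^{-sp}\,dy\bigg)^{1/p}\lesssim\fint_{B}\dist(y,\partial G)^{-s}\,dy\,,
\end{equation*}
which is elementary: if $B$ is within distance $\simeq r$ of $\partial G$ the left side is $\lesssim r^{-s}$ by the Aikawa bound and the right side is $\gtrsim r^{-s}$ since $\dist(\cdot,\partial G)\lesssim r$ on $B$; if $B$ is far from $\partial G$ both sides are comparable to $\dist(x,\partial G)^{-s}$. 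Gehring's lemma \cite[Lemma 3]{gehring} then raises the integrability exponent of $\dist(\cdot,\partial G)^{-s}$ from $p$ to some $p+\epsilon$, and unwinding this on balls centred at $\partial G$ yields $n-s(p+\epsilon)\in\mathcal{A}(\partial G)$, hence $\mathrm{dim}_{\mathcal{A}}(\partial G)<n-sp$. This reverse-H\"older/Gehring step is what is missing from your sketch, and a packing-exponent argument cannot replace it.
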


\begin{proof}
We rely on the following homogeneity property, \cite[Corollary 5.16]{T3}. Namely,
\begin{equation}\label{hom}
\lVert f(r\cdot)\rVert_{F^{s}_{pq}(\R^n)}
\simeq r^{s-n/p} \lVert f\rVert_{F^s_{pq}(\R^n)}
\end{equation}
for every $0<r\le 1$ and every $f\in F^{s}_{pq}(\R^n)$ supported
in $B(0,r)=\{x\in \R^n\,:\, \lvert x\rvert<r\}$.

Let us consider a point $x\in\partial G$ and a radius $0<r\le 1$. Without loss of generality, we may assume
that $x=0$.
Fix a function $\varphi\in C^\infty_0(\R^n)$,
supported in $B(0,1)$, and satisfying  $\varphi(x)=1$ if $x\in B(0,1/2)$.
Denote $f(y)=\varphi(y/r)$ for $y\in \R^n$. 
Since the measure of $\R^n\setminus G$ is zero, 
\begin{align}\label{AikawaEstimate}
\int_{B(x,r/2)} \dist(y,\partial G)^{-sp}\,dy &\le 
\int_{G} \frac{\lvert f(y)\rvert^p}{\dist(y,\partial G)^{sp}}\,dy\
\lesssim \lVert f\vert_G\rVert_{F^{s}_{pq}(G)}^p\\&\le \lVert f\rVert_{F^{s}_{pq}(\R^n)}^p \simeq r^{n-sp} \lVert f(r\cdot)\rVert_{F^s_{pq}(\R^n)}^p\lesssim r^{n-sp}\,.
\end{align}
By inequality \eqref{AikawaEstimate} and Remark
\ref{r.compact}, we have $n-sp\in\mathcal{A}(\partial G)$. To show that the Aikawa dimension is, indeed,  strictly less than $n-sp$,
we proceed as in the proof of \cite[Lemma 2.4]{KoskelaZhong}.
Since $n-sp\in\mathcal{A}(\partial G)$ it is
 straightforward to verify that,
for every $x\in \R^n$ and $r>0$,
\begin{equation}\label{ReverseHolder}
\bigg(\fint_{B(x,r)} \dist(y,\partial G)^{-sp}\,dy\bigg)^{1/p} \lesssim \fint_{B(x,r)} \dist(y,\partial G)^{-s}\,dy\,.
\end{equation}
Hence, by the self-improving properties of reverse H\"older inequalities, \cite[Lemma 3]{gehring}, we find that 
$n-sp-\delta\in\mathcal{A}(\partial G)$ for some $\delta>0$, and the claim follows.
\end{proof}

\section{Applications}\label{s.application}



In this section, we study two problems closely related to Hardy-inequalities: the boundedness of the zero extension
operator and the boundedness of pointwise multiplier operators.
First, let us consider the zero extension
operator $E_0:W^{s,p}(G)\to W^{s,p}(\R^n)$, recall definition \eqref{zero_ext_def}. 
\begin{lemma}\label{l.chi_bounded}
Suppose that $G$ is a proper domain in $\R^n$. Let $0<s<1$ and $1<p<\infty$.
Then the zero extension
$E_0f$
 of any  $f\in W^{s,p}(G)$ satisfies the following inequality
\[
\lVert E_0f \rVert_{W^{s,p}(\R^n)}
\lesssim \lVert f\rVert_{W^{s,p}({G})} + \bigg(\int_{{G}} 
\frac{\lvert f(x)\rvert^p}{ \mathrm{dist}(x,\partial G)^{sp}}\,dx\bigg)^{1/p}\,,
\]
where the implied constant depends on $n$, $s$, and $p$.
\end{lemma}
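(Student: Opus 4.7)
The plan is to decompose the Gagliardo seminorm
\[
|E_0 f|_{W^{s,p}(\R^n)}^p = \int_{\R^n}\int_{\R^n} \frac{|E_0 f(x)-E_0 f(y)|^p}{|x-y|^{n+sp}}\,dy\,dx
\]
according to the four regions determined by whether $x$ and $y$ lie in $G$ or in $\R^n\setminus G$. The region $\{x,y\in G\}$ contributes exactly $|f|_{W^{s,p}(G)}^p$, while the region $\{x,y\in \R^n\setminus G\}$ contributes zero since $E_0 f$ vanishes there. The two mixed regions are symmetric and produce the nontrivial term.

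For the mixed region, using $E_0 f(x)=f(x)$ and $E_0 f(y)=0$, the contribution reduces to
\[
\int_{G}|f(x)|^p \left(\int_{\R^n\setminus G} \frac{dy}{|x-y|^{n+sp}}\right) dx.
\]
The key geometric observation is that, for fixed $x\in G$, the set $\R^n\setminus G$ is contained in the complement of the open ball $B(x,\dist(x,\partial G))$. Therefore the inner integral is bounded above by
\[
\int_{\R^n\setminus B(x,\dist(x,\partial G))}\frac{dy}{|x-y|^{n+sp}}=C(n,s,p)\,\dist(x,\partial G)^{-sp},
\]
which is finite precisely because $sp>0$. Substituting this bound yields the weighted Hardy-type term on the right-hand side of the desired inequality.

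Finally, the $L^p$ part is immediate: $\|E_0 f\|_{L^p(\R^n)}=\|f\|_{L^p(G)}\le \|f\|_{W^{s,p}(G)}$. Combining with the Gagliardo estimate above gives the claim. No step here is substantially difficult; the only mild subtlety is checking that the integral $\int_{\R^n\setminus B(0,r)}|z|^{-n-sp}dz$ converges and scales like $r^{-sp}$, which is a direct polar-coordinates computation using $sp>0$.
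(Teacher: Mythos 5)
Your proposal is correct and follows essentially the same route as the paper: split the Gagliardo seminorm by whether each of $x,y$ lies in $G$, keep the $G\times G$ piece as $|f|_{W^{s,p}(G)}^p$, bound the two symmetric mixed pieces by $\int_G|f(x)|^p\dist(x,\partial G)^{-sp}\,dx$ via the containment $\R^n\setminus G\subset\R^n\setminus B(x,\dist(x,\partial G))$, and handle the $L^p$ term trivially.
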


\begin{proof}
By definition \eqref{d.frac_def},
 $\lVert E_0f\rVert_{L^p(\R^n)}=\lVert f\rVert_{L^p(G)}\le \lVert f\rVert_{W^{s,p}(G)}$.
Next, let us write
\begin{align*}
\lvert E_0f \rvert_{W^{s,p}(\R^n)}^p &=
\int_{\R^n}\int_{\R^n}\frac{\lvert E_0f(x) - E_0f(y)\rvert^p}
{\lvert x-y\rvert^{n+sp}}\,dy\,dx\\
&\le \lvert f\rvert_{W^{s,p}(G)}^p + 2 \int_{{G}} 
\lvert f(x)\rvert^p \int_{\R^n\setminus {G}} \frac{1}{\lvert x-y\rvert^{n+sp}}\,dy\,dx\,.
\end{align*}
It remains to apply the following estimates, which are valid
for $x\in{G}$,
\begin{align*}
&\int_{\R^n\setminus {G}} \frac{1}{\lvert x-y\rvert^{n+sp}}\,dy\\
&\le \int_{\R^n\setminus B(x,\mathrm{dist}(x,\partial{G}))} \frac{1}{\lvert x-y\rvert^{n+sp}}\,dy
\lesssim \int_{\mathrm{dist}(x,\partial G)}^\infty r^{-1-sp}\,dr \lesssim 
\mathrm{dist}(x,\partial{G})^{-sp}\,.
\end{align*}
This completes the proof of the lemma.
\end{proof}
The following theorem is a consequence of Lemma \ref{l.chi_bounded}
and Theorem \ref{t.john_hardy}.
\begin{theorem}\label{t.equivalences}
Suppose that $n\ge 2$, $1<p<\infty$, and $0<s<\min\{1,n/p\}$. Let $G$ be
a John domain in $\R^n$ such that
$\mathrm{dim}_{\mathcal{A}} (\partial G)< n-sp$. Then, for every $f\in W^{s,p}(G)$,
\begin{equation}\label{e.equivalences}
\lVert E_0 f\rVert_{W^{s,p}(\R^n)}\le C \lVert f\rVert_{W^{s,p}(G)}\,,
\end{equation}
where a constant $C$ depends on $n$, $s$, $p$, and $G$.
\end{theorem}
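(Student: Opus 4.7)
The plan is to combine the two already established ingredients in the paper: Lemma \ref{l.chi_bounded}, which controls the $W^{s,p}(\R^n)$-norm of a zero extension by the sum of the $W^{s,p}(G)$-norm and a fractional Hardy-type integral, and Theorem \ref{t.john_hardy}, which precisely bounds that Hardy-type integral in the present geometric setting. Since Lemma \ref{l.chi_bounded} is proven for general proper domains and Theorem \ref{t.john_hardy} requires the John hypothesis together with $\dim_{\mathcal{A}}(\partial G) < n - sp$, both assumptions in the present statement are exactly what is needed to chain the two estimates.

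Concretely, I would first invoke Lemma \ref{l.chi_bounded} for $f \in W^{s,p}(G)$ to obtain
\[
\lVert E_0 f\rVert_{W^{s,p}(\R^n)} \lesssim \lVert f\rVert_{W^{s,p}(G)} + \bigg(\int_{G}\frac{\lvert f(x)\rvert^p}{\dist(x,\partial G)^{sp}}\,dx\bigg)^{1/p}.
\]
Note that Lemma \ref{l.chi_bounded} requires $0<s<1$ and $1<p<\infty$, both of which hold by the assumption $0<s<\min\{1,n/p\}$.

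Next, I would apply Theorem \ref{t.john_hardy}: the hypotheses $n\ge 2$, $1<p<\infty$, $0<s<\min\{1,n/p\}$, $G$ a John domain, and $\dim_{\mathcal{A}}(\partial G)< n-sp$ match exactly, so that
\[
\bigg(\int_{G}\frac{\lvert f(x)\rvert^p}{\dist(x,\partial G)^{sp}}\,dx\bigg)^{1/p} \lesssim \lVert f\rVert_{L^p(G)} + \lvert f\rvert_{W^{s,p}(G)} = \lVert f\rVert_{W^{s,p}(G)},
\]
where the implied constant absorbs the dependence on $n$, $s$, $p$, and $G$. Substituting back yields the desired inequality $\lVert E_0 f\rVert_{W^{s,p}(\R^n)}\le C\lVert f\rVert_{W^{s,p}(G)}$.

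There is no real obstacle here: the proof is a direct composition of the two earlier results, and the only point of care is verifying that the parameter ranges are compatible. In particular, the restriction $s<1$ in Lemma \ref{l.chi_bounded} is guaranteed by $s<\min\{1,n/p\}$, and Theorem \ref{t.john_hardy} is available under precisely the hypotheses assumed here, so no auxiliary argument is necessary.
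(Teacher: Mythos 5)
Your proof is correct and follows exactly the route the paper indicates: Theorem \ref{t.equivalences} is stated there as a direct consequence of Lemma \ref{l.chi_bounded} and Theorem \ref{t.john_hardy}, which is precisely the composition you carry out. The parameter checks you include are the only nontrivial point and you handle them correctly.
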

The assumption that $G$ is a John
domain can be relaxed, e.g., by Theorem \ref{t.john_hardy_TL}
and extension results in \cite{Z}. 
However, the proof of Theorem \ref{t.equivalences} under the John
assumption has the advantage of being rather simple.

Let us now study the boundedness of pointwise multiplier operators. 
The following proposition is proved in \cite[Proposition 4.1]{ihnatsyeva2}.
\begin{proposition}\label{ext_hardy}
Let $G$ be a domain in $\R^n$ whose boundary is porous in $\R^n$.
 Let $f\in F^{s}_{pq}(\R^n)$, $1<p<\infty$, $1\le q< \infty$, and $s>0$.
Then 
\begin{equation}
\lVert f\chi_G \rVert_{F^{s}_{pq}(\R^n)}\lesssim \lVert f\rVert_{F^{s}_{pq}(\R^n)}+\bigg(\int_G\frac{|f(x)|^p}{\dist(x,\partial G)^{s p}}\,dx\bigg)^{1/p}.
\end{equation}
The implied constant depends on $p$, $q$, $s$, $n$, and $\partial G$.
\end{proposition}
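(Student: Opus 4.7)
The plan is to estimate $\|f\chi_G\|_{F^s_{pq}(\R^n)}$ using the local polynomial approximation characterization of the Triebel--Lizorkin norm from \S\ref{spaces}, with a fixed integer $k>s$. Since $\|f\chi_G\|_{L^p(\R^n)}\le \|f\|_{L^p(\R^n)}$, it suffices to control the smoothness functional. I would discretise this functional as a sum over dyadic cubes of side length at most $1$, and estimate $\mathcal{E}_k(f\chi_G, Q)_{L^p(\R^n)}$ separately for three types of cubes $Q$: (i) $Q\subset G$, where $f\chi_G=f$ so that $\mathcal{E}_k(f\chi_G, Q)\le \mathcal{E}_k(f,Q)$; (ii) $Q\cap G=\emptyset$, where $\mathcal{E}_k(f\chi_G, Q)=0$; and (iii) $Q$ meets $\partial G$, in which case the test polynomial $P=0$ together with the pointwise bound $\dist(y, \partial G)\lesssim \ell(Q)$ on $Q$ gives
\[
\mathcal{E}_k(f\chi_G, Q)_{L^p(\R^n)}^p \,\le\, \fint_Q |f|^p\chi_G \,\lesssim\, \ell(Q)^{sp}\fint_Q h, \qquad h := \frac{|f|^p\chi_G}{\dist(\cdot,\partial G)^{sp}}.
\]

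The contribution of type~(i) cubes is dominated by $\|f\|_{F^s_{pq}(\R^n)}$, and type~(ii) cubes contribute nothing, so the work is in the type~(iii) boundary cubes. These cubes lie in the porous-close family $\mathcal{C}_{\partial G,\gamma}$ from \eqref{c_definition}, and after the estimate above their contribution to $\|f\chi_G\|_{F^s_{pq}(\R^n)}^p$ is bounded by a constant multiple of
\[
\bigg\|\bigg(\sum_{Q\in\mathcal{C}_{\partial G,\gamma}} \chi_Q \alpha_Q^q\bigg)^{1/q}\bigg\|_{L^p(\R^n)}^p, \qquad \alpha_Q := \bigg(\fint_Q h\bigg)^{1/p}.
\]
Porosity of $\partial G$ enters through Theorem~\ref{reverse}, which renders the microscopic parameter inessential for sums over $\mathcal{C}_{\partial G,\gamma}$ and should eventually yield the required bound $\lesssim \int_G |f|^p/\dist(\cdot,\partial G)^{sp}$.

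The main obstacle is precisely this last reduction. Since the counting function $\sum_{Q\in\mathcal{C}_{\partial G,\gamma}}\chi_Q(x)$ grows like $\log(1/\dist(x,\partial G))$ as $x\to\partial G$, a naive application of Fubini after Theorem~\ref{reverse} with microscopic parameter $p$ would produce an extraneous factor of $\log(1/\dist(\cdot,\partial G))$ on $h$, which is inadmissible. To close the argument one has to unpack the maximal-function techniques underpinning Theorem~\ref{reverse} (see \cite{ihnatsyeva}), exploiting the nested structure of porous-close cubes sharing a common point to absorb this logarithm. Once this technical step is carried out, combining the three contributions yields the proposition.
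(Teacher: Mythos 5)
Your reduction to the boundary cubes and the identification of the logarithmic overcounting are both correct, and you are candid that the argument is not closed. The problem is that the proposed fix -- ``unpack the maximal-function techniques underpinning Theorem~\ref{reverse} ... to absorb this logarithm'' -- is not a proof, and I do not think it can be made into one starting from the bound $\mathcal{E}_k(f\chi_G,Q)_{L^p}^p\lesssim\ell(Q)^{sp}\fint_Q h$ you use for boundary cubes. Note that Theorem~\ref{reverse} only moves in the direction $\ell^1\to\ell^q$, i.e.\ it can only \emph{increase} the microscopic parameter. Applying it to your quantity $\lVert(\sum_{Q\in\mathcal{C}}\chi_Q\alpha_Q^q)^{1/q}\rVert_p^p$ therefore replaces $q$ by some $q'\ge q$, never by something smaller; and even at the extreme end $q'=\infty$ one lands on
\[
\Big\lVert\sup_{Q\in\mathcal{C}}\chi_Q\alpha_Q\Big\rVert_p^p\le\big\lVert(Mh)^{1/p}\big\rVert_p^p=\lVert Mh\rVert_{L^1}\,,
\]
and the Hardy--Littlewood maximal operator is not bounded on $L^1$. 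So no amount of manipulation of the microscopic parameter, and no bounded-overlap structure of the family $\mathcal{C}_{\partial G,\gamma}$, will turn $\alpha_Q=(\fint_Q h)^{1/p}$ into the bound $\lesssim\lVert h\rVert_{L^1}$. Already for $q=p$ the reduction forces you to estimate $\sum_{Q\in\mathcal{C}}\int_Q h=\int_G h\,N$ with $N(x)\simeq\log(1/\dist(x,\partial G))$, and $\int_G hN\lesssim\int_G h$ is false for general $h\in L^1(G)$.

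The missing idea is a \emph{finer} estimate for $\mathcal{E}_k(f\chi_G,Q)$ on boundary cubes, one which separates the piece controlled by the smoothness of $f$ from the piece controlled by $h$, and which, for the second piece, produces averages of $h$ over a subfamily of cubes with \emph{bounded overlap}. This is exactly where porosity should be used, not through Theorem~\ref{reverse} but structurally: porosity of $\partial G$ gives, inside each $Q\in\mathcal{C}_{\partial G,\gamma}$, a comparable subcube $g(Q)$ with $g(Q)\cap\partial G=\emptyset$ and $\dist(g(Q),\partial G)\simeq\ell(Q)$, and the family $\{g(Q)\}_{Q\in\mathcal{C}}$ has bounded overlap. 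One can then write $f\chi_G-P_{k,Q}f=(f-P_{k,Q}f)\chi_G-P_{k,Q}f\,\chi_{\R^n\setminus G}$; the first term is controlled by $\mathcal{E}_k(f,Q)$ and hence by $\lVert f\rVert_{F^s_{pq}}$ after summation, while the second is controlled, via a reverse H\"older inequality for polynomials on $g(Q)$, by $\mathcal{E}_k(f,Q)$ plus an average of $|f|^p$ over $g(Q)$ rather than over all of $Q$. When $g(Q)\subset G$ this average is $\lesssim\ell(Q)^{sp}\fint_{g(Q)}h$, and summing over $Q$ now yields $\lesssim\int_G h$ precisely because of the bounded overlap of the $g(Q)$'s; the case $g(Q)\subset\R^n\setminus G$ is handled by the choice $P=0$ together with a comparison to the neighbouring cube in $\mathcal{C}$ at the same scale. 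Without this refinement the logarithm is genuine, not a technicality, so as written the proof has a gap in its crucial step.
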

%

\begin{theorem}\label{t.equivalences_TL}
Let $n\ge 2$, $1<p<\infty$, and $0<s<n/p$.
Suppose that  $G$ is a domain in $\R^n$, with a compact boundary, such that
$\mathrm{dim}_{\mathcal{A}}(\partial G)< n-sp$. Then,
for every $1\le q< \infty$, the pointwise multiplier operator
$f\mapsto \chi_G f$
is bounded on $F^{s}_{pq}(\R^n)$.
\end{theorem}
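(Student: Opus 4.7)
The plan is to combine the two main ingredients already in hand: the Hardy inequality from Theorem \ref{t.john_hardy_TL}(A) and the conditional multiplier estimate in Proposition \ref{ext_hardy}. The only missing link is the verification that the boundary is porous, which is exactly what allows Proposition \ref{ext_hardy} to be invoked.

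First I would observe that, by assumption, $\mathrm{dim}_{\mathcal{A}}(\partial G) < n - sp < n$, so by Remark \ref{porous_obs} (the Aikawa and Assouad dimensions coincide, and the Assouad dimension being strictly less than $n$ characterizes porosity) the boundary $\partial G$ is porous in $\R^n$. Hence Proposition \ref{ext_hardy} applies to every $f \in F^s_{pq}(\R^n)$, yielding
\[
\lVert \chi_G f\rVert_{F^s_{pq}(\R^n)} \lesssim \lVert f\rVert_{F^s_{pq}(\R^n)} + \bigg(\int_{G} \frac{\lvert f(x)\rvert^p}{\dist(x,\partial G)^{sp}}\,dx\bigg)^{1/p}.
\]

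Next I would control the Hardy term on the right. Since $f \in F^s_{pq}(\R^n)$, its restriction $f|_G$ lies in $F^s_{pq}(G)$ with $\lVert f|_G\rVert_{F^s_{pq}(G)} \le \lVert f\rVert_{F^s_{pq}(\R^n)}$ by the very definition of $F^s_{pq}(G)$. Applying Theorem \ref{t.john_hardy_TL}(A) to $f|_G$ then gives
\[
\bigg(\int_{G} \frac{\lvert f(x)\rvert^p}{\dist(x,\partial G)^{sp}}\,dx\bigg)^{1/p} \lesssim \lVert f|_G\rVert_{F^s_{pq}(G)} \le \lVert f\rVert_{F^s_{pq}(\R^n)}.
\]
Combining the two displays yields $\lVert \chi_G f\rVert_{F^s_{pq}(\R^n)} \lesssim \lVert f\rVert_{F^s_{pq}(\R^n)}$, which is the desired boundedness.

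There is essentially no main obstacle here, as the substantive work has already been done: Proposition \ref{ext_hardy} reduces multiplier boundedness to a Hardy-type bound on $G$, and Theorem \ref{t.john_hardy_TL}(A) supplies that bound under precisely the Aikawa-dimension hypothesis in the statement. The only conceptual step is recognizing, via Remark \ref{porous_obs}, that the Aikawa-dimension assumption automatically supplies porosity of $\partial G$, so that no extra geometric hypothesis need be imposed.
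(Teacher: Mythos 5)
Your proposal is correct and follows exactly the paper's own argument: observe porosity of $\partial G$ via Remark \ref{porous_obs}, apply Proposition \ref{ext_hardy}, and bound the resulting Hardy term by Theorem \ref{t.john_hardy_TL}(A) together with the trivial restriction estimate $\lVert f|_G\rVert_{F^s_{pq}(G)} \le \lVert f\rVert_{F^s_{pq}(\R^n)}$. No differences of substance.
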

\begin{proof}
Observe that $\partial G$ is porous in $\R^n$, see Remark \ref{porous_obs}.
By Proposition \ref{ext_hardy} and Theorem \ref{t.john_hardy_TL}, for
$f\in F^{s}_{pq}(\R^n)$,
\begin{align*}
\lVert f\chi_G\rVert_{F^{s}_{pq}(\R^n)}
\lesssim \lVert f\rVert_{F^{s}_{pq}(\R^n)} +\lVert f\vert_G\rVert_{F^{s}_{pq}(G)}
\le 2\lVert f\rVert_{F^{s}_{pq}(\R^n)}\,.
\end{align*}
This concludes the proof.
\end{proof}

The statement of Theorem \ref{t.equivalences_TL} is not new and it
is covered  by \cite[Theorem 13.3]{FJ}, whose
proof is more technical, based upon atomic decompositions.
We also refer to \cite[\S 4.1]{sickel}.


\begin{thebibliography}{BBM2}
 \bibitem{AH} D.R. Adams and L.I. Hedberg, Function spaces and potential theory, 
Grundlehren Math. Wiss.  314,
Springer-Verlag, Berlin, 1996.

\bibitem{aikawa}\label{aikawa}
{H. Aikawa},
\textit{Quasiadditivity of Riesz capacity},
{Math. Scand.}, \textbf{69} (1991), no. 1,  {15--30}.

\bibitem{BesovIlinNikolski} O.V. Besov, V.P. Il'in and S.M. Nikol'skii, Integral representations of functions and embedding theorems, 
New York: J. Wiley and Sons, vol. 1, 1978; vol. 2, 1979.

\bibitem{Bojarski} B. Bojarski, 
\textit{Remarks on Sobolev imbedding inequalities}, in: 
Lecture Notes in Math., vol. 1351, Springer,
New York, 1988,  pp. 52--68.


\bibitem{Brudnyi74} Yu. Brudnyi, Spaces defined by means of local approximations, Trudy Moskov. Math. Obshch.  24 (1971) 69--132; English transl.: Trans. Moskow Math Soc.  24 (1974) 73--139.


\bibitem{caetano}
{Ant\'onio M. Caetano},
\textit{Approximation by functions of compact support in Besov--Triebel--Lizorkin spaces on irregular domains},
{Studia Math.}, \textbf{142} (2000), no. 1,  {47--63}.



\bibitem{DeVoreSharpley}
R.A. DeVore and R.C. Sharpley, Maximal functions measuring smoothness,
Mem. Amer. Math. Soc. \textbf{47} (1984), 1--115.

\bibitem{Dyda2004}
Bartlomiej Dyda,
\textit{A fractional order Hardy inequality},
{Illinois J. Math.}, \textbf{48}  (2004),
{575--588}.


\bibitem{E-HSV}
{David E. Edmunds}, {Ritva Hurri-Syrj\"anen} and {Antti V. V{\"a}h{\"a}kangas},
\textit{Fractional Hardy-type inequalities in domains with uniformly fat complement},
{Proc. Amer. Math. Soc.}, to appear.


\bibitem{FJ}
Michael Frazier and Bj\"orn Jawerth, 
\textit{A discrete transform and
decompositions of distribution spaces},
J. Funct. Anal. \textbf{93} (1990), 34--170.

\bibitem{gehring}
F.W. Gehring,
\textit{The $L^p$-integrability of the partial derivatives of a quasiconformal mapping},
Acta Math., \textbf{130} (1973),  265--277.


\bibitem{h-smv}
Ritva Hurri-Syrj\"anen, Niko Marola and Antti V. V{\"a}h{\"a}kangas,
\textit{Aspects of local to global
  results}, arXiv:1301.7273 (2013).


\bibitem{ihnatsyeva}
Lizaveta Ihnatsyeva and Antti V. V{\"a}h{\"a}kangas,
\textit{Characterization of traces of smooth functions on Ahlfors regular sets}, arXiv:1109.2248 (2011).


\bibitem{ihnatsyeva2}
{Lizaveta Ihnatsyeva} and {Antti V. V\"ah\"akangas},
\textit{Hardy inequalities in Triebel-Lizorkin spaces}, {Indiana Univ. Math. J.}, to appear.

\bibitem{iwaniec} T. Iwaniec and C.A. Nolder, 
\textit{Hardy--Littlewood inequality for quasiregular mappings in certain domains 
in {$\R^n$}},
Ann. Acad. Sci. Fenn. Ser. A I Math., \textbf{10} (1985) 267--282.


\bibitem{KoskelaZhong} P. Koskela and X. Zhong, \textit{Hardy's inequality
and the boundary size}, Proc. Amer. Math. Soc. \textbf{131} (2003), no. 4, 
1151--1158.



\bibitem{lehrback}
Juha Lehrb\"ack,
\textit{Weighted Hardy inequalities and the size of the boundary},
Manuscripta Math., \textbf{127} (2008), no. 2, 249--273.

\bibitem{lehrbackII}
Juha Lehrb\"ack and Heli Tuominen,
\textit{A note on the dimensions of Assouad and Aikawa},
 J. Math. Soc. Japan, to appear.

\bibitem{Leindler} L. Leindler,
\textit{Generalization of inequalities of Hardy and Littlewood}, 
Acta Sci. Math. (Szeged), \textbf{ 31} (1970), 279--285. 

\bibitem{Lewis1988}
John L. Lewis, \textit{Uniformly fat sets}, 
Trans. Amer. Math. Soc., \textbf{308} (1988), 177--196.

\bibitem{Luukkainen} J. Luukkainen, 
\textit{Assouad dimension: antifractal metrization, porous sets}, 
and homogeneous measures, J. Korean Math. Soc., \textbf{35} (1998),  23--76.


\bibitem{Shvartsman}
Pavel Shvartsman, \textit{Local approximations and intrinsic characterization
of spaces of smooth functions on regular subsets of
$\mathbb{R}^n$}, Math. Nachr.,  \textbf{279}  (2006),
1212--1241.

\bibitem{sickel}
Winfried Sickel,
\textit{On pointwise multipliers
for $F^{s}_{pq}(\R^n)$ in case $\sigma_{p,q}<s<n/p$},
Annali Mat. pura applicata, \textbf{176} (1999),  209--250.

\bibitem{Stein}
Elias M. Stein,
\textit{Singular integrals and differentiability properties of functions},
Princeton Univ. Press, Princeton, New Jersey, 1970.

\bibitem{T89} Hans Triebel, \textit{Local approximation spaces},
Z. Anal. Anwendungen, \textbf{8} (1989), 261--288.

\bibitem{T99}
Hans Triebel,
\textit{Hardy inequalities in function spaces},
Math. Bohem., \textbf{124} (1999), 123--130.

\bibitem{T03}
Hans Triebel,
\textit{Non-smooth atoms and pointwise multipliers in function spaces},
Ann. Mat. Pura Appl., \textbf{182} (2003), 457--486.

\bibitem{T2}
Hans Triebel, Theory of Function Spaces II, Basel, Birkh\"auser, 1992.

\bibitem{T3} Hans Triebel, The Structure of Functions, Basel, Birkh\"auser, 2001.

\bibitem{T4} Hans Triebel, Function Spaces and Wavelets on Domains, European Mathematical Society, 2008.

\bibitem{Z}
Yuan Zhou,
\textit{Fractional Sobolev extension and imbedding},
Trans. Amer. Math. Soc., to appear.

\end{thebibliography}
\end{document}